\let\origsection=\section \def\section{\@ifstar{\origsection*}{\mysection}} 
\def\mysection{\@startsection{section}{1}\z@{.7\linespacing\@plus\linespacing}{.5\linespacing}{\normalfont\scshape\centering\S}}
\renewcommand{\PrintDOI}[1]{\doi{#1}}
\numberwithin{equation}{section}
\numberwithin{figure}{section}
\def\rmlabel{\upshape({\itshape \roman*\,})}
\def\alabel{\upshape({\itshape \alph*\,})}
\let\polishlcross=\l
\def\l{\ifmmode\ell\else\polishlcross\fi}
\def\qand{\quad\text{and}\quad}
\let\setminus=\smallsetminus
\let\sm=\setminus
\def\moverlay{\mathpalette\mov@rlay}
\def\mov@rlay#1#2{\leavevmode\vtop{   \baselineskip\z@skip \lineskiplimit-\maxdimen
   \ialign{\hfil$\m@th#1##$\hfil\cr#2\crcr}}}
\newcommand{\charfusion}[3][\mathord]{
    #1{\ifx#1\mathop\vphantom{#2}\fi
        \mathpalette\mov@rlay{#2\cr#3}
      }
    \ifx#1\mathop\expandafter\displaylimits\fi}
\newcommand{\dcup}{\charfusion[\mathbin]{\cup}{\cdot}}
\DeclareFontFamily{U}  {MnSymbolC}{}
\DeclareSymbolFont{MnSyC}         {U}  {MnSymbolC}{m}{n}
\DeclareFontShape{U}{MnSymbolC}{m}{n}{
    <-6>  MnSymbolC5
   <6-7>  MnSymbolC6
   <7-8>  MnSymbolC7
   <8-9>  MnSymbolC8
   <9-10> MnSymbolC9
  <10-12> MnSymbolC10
  <12->   MnSymbolC12}{}
\DeclareMathSymbol{\powerset}{\mathord}{MnSyC}{180}
\let\epsilon=\varepsilon
\let\eps=\epsilon
\let\rho=\varrho
\let\theta=\vartheta
\let\kappa=\varkappa
\def\RR{{\mathds R}}
\def\RT{\mathrm{RT}}
\def\ex{\mathrm{ex}}
\newcommand{\ccF}{\mathscr{F}}
\theoremstyle{plain}
\newtheorem{thm}{Theorem}[section]
\newtheorem{lemma}[thm]{Lemma}
\theoremstyle{definition}
\newtheorem{dfn}[thm]{Definition}
\let\lra=\longrightarrow
\def\wq{\widetilde{w}}
\let\phi=\varphi
\begin{document}

\title[Weighted variants of the Andr\'asfai-Erd\H{o}s-S\'os Theorem]
{Weighted variants of the Andr\'asfai-Erd\H{o}s-S\'os Theorem}

\author[Clara~M.~L\"uders]{Clara Marie L\"uders} 
\author[Christian Reiher]{Christian Reiher}
\thanks{The second author was supported by the European Research Council 
(ERC grant PEPCo 724903).}
\address{Fachbereich Mathematik, Universit\"at Hamburg, Hamburg, Germany}
\email{Christian.Reiher@uni-hamburg.de}
\email{Clara.Marie.Lueders@gmail.com}

\subjclass[2010]{05C35, 05C22}
\keywords{Tur\'an problems, weighted graphs, Andr\'asfai-Erd\H{o}s-S\'os theorem}

\begin{abstract}
	A well known result due to Andr\'asfai, Erd\H{o}s, and S\'os asserts that for $r\ge 2$
	every $K_{r+1}$-free graph $G$ on $n$ vertices with $\delta(G)>\frac{3r-4}{3r-1}n$ is
	$r$-partite. We study related questions in the context of weighted graphs, which are 
	motivated by recent work on the Ramsey-Tur\'an problem for cliques.
\end{abstract}

\maketitle

\section{Introduction}

\subsection{Simple graphs} \label{subsec:simple}

Extremal graph theory began with Tur\'an's discovery~\cite{Turan} that for $r\ge 2$ 
every $n$-vertex graph $G$ with more than $\frac{r-1}{r}\cdot \frac{n^2}{2}$ edges 
contains a $K_{r+1}$, i.e., a clique on~$r+1$ vertices. The constant $\frac{r-1}{r}$ appearing 
here is optimal, as can be seen by looking at balanced complete $r$-partite graphs. 
Simonovits~\cite{Si68} proved that this extremal configuration 
is subject to a stability phenomenon roughly saying that a $K_{r+1}$-free graph with 
almost the maximum number of edges is ``almost'' $r$-partite.  

\begin{thm}[Simonovits] \label{thm:stab}
	For every $r\ge 2$ and $\eps>0$ there exists some $\delta>0$ such that 
	every $K_{r+1}$-free graph $G$ on $n$ vertices with at least 
	$\bigl(\tfrac{r-1}{r}-\delta\bigr)\frac{n^2}{2}$ edges admits a partition 
	$V(G)=W_1\dcup \ldots \dcup W_r$ satisfying $\sum_{i=1}^r e(W_i)<\eps n^2$. 
\end{thm}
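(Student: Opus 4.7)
My plan is to apply Szemer\'edi's regularity lemma to $G$ and transfer the problem to the reduced graph, where Tur\'an's theorem together with an elementary stability argument suffices. Fix $\eps>0$ and choose parameters $\delta\ll\eta\ll\eps$. Applying the regularity lemma with regularity parameter $\eta$ yields an equipartition $V(G)=V_0\dcup V_1\dcup\cdots\dcup V_t$ with $|V_0|\le \eta n$ and at most $\eta\binom{t}{2}$ irregular pairs. I would then form the reduced graph $R$ on $[t]$ by joining $i$ and $j$ whenever $(V_i,V_j)$ is $\eta$-regular with density at least $\sqrt{\eta}$.

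Next I would verify two properties of $R$. First, $R$ still has density close to the Tur\'an density: every edge of $G$ missing from $R$ is either incident to $V_0$, lies inside a single $V_i$, lies in an irregular pair, or lies in a sparse regular pair, and each of these four categories contributes at most $O(\eta n^2)$ edges; hence, provided $\delta\ll\eta$, one gets $e(R)\ge \bigl(\tfrac{r-1}{r}-O(\sqrt{\eta})\bigr)\binom{t}{2}$. Second, $R$ is $K_{r+1}$-free, since the standard counting lemma would otherwise turn any $K_{r+1}$ in $R$ into $\Omega\bigl((n/t)^{r+1}\bigr)$ copies of $K_{r+1}$ in $G$.

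The crucial step is then to conclude that, being $K_{r+1}$-free with near-extremal edge count, $R$ itself must be approximately $r$-partite: there is a partition $[t]=U_1\dcup\cdots\dcup U_r$ with $\sum_i e_R(U_i)\le \eta'\binom{t}{2}$, where $\eta'=\eta'(\eta)$ tends to zero with $\eta$. Given this, setting $W_i=\bigcup_{j\in U_i} V_j$ and distributing the vertices of $V_0$ arbitrarily among the $W_i$ produces the desired partition of $V(G)$, and summing the contributions of edges inside clusters, within irregular or sparse pairs, incident to $V_0$, and within regular cross-pairs lying in the same $U_i$ yields $\sum_i e(W_i)\le \eps n^2$.

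The principal obstacle lies in this ``approximate Tur\'an stability'' for $R$. Although it resembles the theorem itself, no circularity arises because on the reduced graph a purely combinatorial (non-regularity) argument is sufficient. A clean approach is induction on $r$: one shows that any vertex of near-maximum degree in $R$ has a neighbourhood that is $K_r$-free of near-extremal density and applies the inductive hypothesis there, absorbing the small set of vertices of lower degree into an arbitrary class. The base case $r=2$ reduces to the statement that a triangle-free graph of density close to $\tfrac{1}{2}$ is nearly bipartite, which follows from a short max-cut argument. The main technical care throughout lies in choosing the hierarchy $\delta\ll\eta\ll\eta'\ll\eps$ so that the error terms accumulated when lifting the partition back to $G$ do not exceed~$\eps n^2$.
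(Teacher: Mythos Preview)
Your approach is correct in outline, but it differs substantially from the argument the paper sketches, and it also carries some unnecessary overhead.

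The paper's route is entirely elementary and avoids the regularity lemma. One first deletes vertices of degree at most $\bigl(\tfrac{r-1}{r}-\eta\bigr)n$ iteratively; since each such deletion removes few edges, the remaining graph still has near-extremal edge count and hence, by Tur\'an's theorem, contains a clique $\{v_1,\ldots,v_r\}$. The joint neighbourhoods $\widetilde W_i=\bigcap_{j\ne i}N(v_j)$ are then pairwise disjoint independent sets (else a $K_{r+1}$ arises), and the minimum degree condition forces them to cover all but $O(\eta n)$ vertices. Extending the $\widetilde W_i$ arbitrarily to a partition gives the result.

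Your regularity-lemma reduction is valid, but notice that the ``purely combinatorial'' stability argument you propose for the reduced graph $R$---induction on $r$ via the neighbourhood of a high-degree vertex---applies verbatim to $G$ itself, with no need to pass through regularity first. So the regularity step buys nothing here; it would be essential if the forbidden graph were not a clique (so that the counting lemma is genuinely needed to transfer the $F$-freeness to $R$), but for $K_{r+1}$ the elementary route is both shorter and gives better quantitative dependence of $\delta$ on~$\eps$. If you wish to keep your inductive scheme, apply it directly to $G$; if you wish to keep the regularity lemma, you may as well cite stability for $R$ as a black box rather than reprove it.
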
 
      
A standard proof of Theorem~\ref{thm:stab} starts with the observation that an iterative
deletion of vertices with small degree allows us to reduce to the case that 
$\delta(G)>\bigl(\tfrac{r-1}{r}-\eta\bigr)n$ holds for an arbitrary constant $\eta\ll \eps$
chosen in advance.
One then takes a clique of order $r$ in $G$, whose existence is guaranteed by Tur\'an's
theorem, and observes that the joint neighbourhoods $\widetilde{W}_1, \ldots, \widetilde{W}_r$
of the $(r-1)$-subsets of this clique are mutually disjoint independent sets, since otherwise
$G$ would contain a $K_{r+1}$. Finally, the minimum degree condition ensures that these 
sets cover all but at most $r\eta n$ vertices of $G$, for which reason any partition 
$V(G)=W_1\dcup \ldots \dcup W_r$ with $W_i\supseteq \widetilde{W}_i$ for all $i\in [r]$
has the desired property. 

As a matter of fact, however, the second part of the argument can be omitted by appealing 
instead to a result of Andr\'asfai, Erd\H{o}s, and S\'os~\cite{AES74} telling us that an 
appropriate minimum degree condition of the form $\delta(G)>\bigl(\tfrac{r-1}{r}-\eta\bigr)n$
implies that $K_{r+1}$-free graphs are $r$-partite. For an elegant alternative proof of this 
fact we refer to Brandt~\cite{Brandt}.

\begin{thm}[Andr\'asfai, Erd\H{o}s, and S\'os]\label{thm:AES}
	Let $G$ be for some $r\ge 2$ a $K_{r+1}$-free graph on~$n$ vertices 
	satisfying $\delta(G)>\frac{3r-4}{3r-1}n$. Then there is a homomorphism
	from $G$ to $K_r$, i.e.,~$G$ is $r$-colourable.
\end{thm}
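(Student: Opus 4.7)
The plan is to argue by induction on $r$.

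For the base case $r=2$, I would assume towards a contradiction that a triangle-free graph $G$ on $n$ vertices with $\delta(G)>\tfrac{2n}{5}$ contains an odd cycle, and pick a shortest one $C=v_1\ldots v_{2k+1}$; triangle-freeness forces $k\ge 2$. A brief structural analysis shows that every vertex of $G$ has at most two neighbours on $C$: any chord would create a shorter odd cycle, so a vertex of $C$ has only its two cycle neighbours on $C$, while for a vertex $w\notin V(C)$, any two of its neighbours on $C$ must lie at even distance along $C$, in fact at distance exactly $2$ once $k\ge 2$, and three vertices of $C$ cannot be pairwise at distance $2$ on an odd cycle of length $\ge 5$. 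A double count of the edges incident to $V(C)$ then yields
\[
    (2k+1)\delta(G) \le 2(2k+1) + 2(n-2k-1) = 2n,
\]
contradicting $\delta(G)>\tfrac{2n}{5}$.

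For the inductive step, fix $r\ge 3$ and a $K_{r+1}$-free graph $G$ with $\delta(G)>\tfrac{3r-4}{3r-1}n$. If $\omega(G)<r$, then $G$ is already $K_r$-free, and because $\tfrac{3r-4}{3r-1}>\tfrac{3r-7}{3r-4}$ the inductive hypothesis applied to $G$ produces an $(r-1)$-colouring. Otherwise $\omega(G)=r$; fix a maximum clique $T=\{t_1,\ldots,t_r\}$ and set, for each $i\in[r]$,
\[
    W_i=\bigl\{v\in V(G):N(v)\cap T=T\sm\{t_i\}\bigr\},\qquad W_0=V(G)\sm\bigl(W_1\cup\cdots\cup W_r\bigr),
\]
so that $W_1,\ldots,W_r,W_0$ partition $V(G)$ with $t_i\in W_i$. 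Each $W_i$ with $i\ge 1$ is independent, since an edge inside $W_i$ would extend $T\sm\{t_i\}$ to a $K_{r+1}$. Hence, once one proves $W_0=\emptyset$, the partition $W_1,\ldots,W_r$ is the desired proper $r$-colouring.

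Showing $W_0=\emptyset$ is the crux. A double count of $\sum_{v\in V(G)}|N(v)\cap T|=\sum_{i=1}^r d(t_i)$, using $|N(v)\cap T|\le r-1$ in general and $|N(v)\cap T|\le r-2$ for $v\in W_0$, gives
\[
    r\,\delta(G)\le (r-1)n-|W_0|,\qquad\text{so}\qquad |W_0|<\frac{n}{3r-1}.
\]
To close the remaining gap, the plan is to assume some $u\in W_0$ and, writing $M(u)=T\sm N(u)$ with $|M(u)|\ge 2$, produce an explicit $K_{r+1}$ of the form $\{u\}\cup(T\sm M(u))\cup\{w_{i_*}:i_*\in M(u)\}$, where $w_{i_*}\in W_{i_*}\cap N(u)$ forms a pairwise-adjacent transversal of the sets $\{W_{i_*}\cap N(u)\}_{i_*\in M(u)}$. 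Finding this transversal clique is the main obstacle: the minimum degree condition together with $|W_0|<\tfrac{n}{3r-1}$ ensures that $N(u)$ meets every $W_j$ of sufficient size, while the required internal edges should follow from the bound $|N(w)\cap N(w')|\ge 2\delta(G)-n$ on common neighbourhoods. A subtle point is that some $W_{i_*}$ might be small — in the worst case only $\{t_{i_*}\}$ — and handling that likely requires first replacing $T$ by a well-chosen maximum clique obtained by swapping each problematic $t_j$ with a vertex of $W_j\cap N(u)$, so that each $W_{i_*}$ relevant to $u$ has comfortably many vertices in $N(u)$.
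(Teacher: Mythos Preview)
The paper does not prove Theorem~\ref{thm:AES}; it is quoted as a classical result of Andr\'asfai, Erd\H{o}s, and S\'os~\cite{AES74} (with a pointer to Brandt's alternative proof~\cite{Brandt}) and then used as a black box. So there is no ``paper's own proof'' to compare against, and your proposal must be judged on its own.

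Your base case $r=2$ is fine and is the standard shortest-odd-cycle argument. The inductive step, however, is genuinely incomplete, and you say so yourself. After obtaining $|W_0|<\tfrac{n}{3r-1}$ you still need to show $W_0=\emptyset$, and your plan is to pick $u\in W_0$ and build a $K_{r+1}$ on $\{u\}\cup(T\setminus M(u))\cup\{w_i:i\in M(u)\}$ by selecting a pairwise adjacent transversal $w_i\in W_i\cap N(u)$. The two obstacles you flag are real and neither is resolved: you do not prove that $W_i\cap N(u)\ne\emptyset$ for every $i\in M(u)$ (the bound $|W_0|<\tfrac{n}{3r-1}$ does not control individual $|W_i|$, and common-neighbourhood estimates like $|N(w)\cap N(w')|\ge 2\delta(G)-n$ do not by themselves guarantee a hit in a specific class), and you do not show that one can choose the $w_i$ pairwise adjacent. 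The suggested fix of ``replacing $T$ by a well-chosen maximum clique'' is only a hint; it is not clear that such a swap simultaneously repairs all problematic indices, and you give no argument that it does. As written, the proof stops precisely at the crux.

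If you want to complete an argument along these lines, it is worth looking at Brandt's short proof~\cite{Brandt}, which avoids the transversal-clique difficulty altogether by a different structural idea; the original proof in~\cite{AES74} also proceeds differently. Either route would replace the unfinished part of your inductive step.
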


The constant $\frac{3r-4}{3r-1}$ appearing here is optimal.
This can be seen by constructing for some $n$ divisible by $(3r-1)$ a graph $G$
on a set $V$ of $n$ vertices having a partition 
\[
	V=A_1\dcup\ldots\dcup A_5\dcup B_1\dcup\ldots\dcup B_{r-2}
\]
such that 
\[
	|A_1|=\ldots=|A_5|=\tfrac{n}{3r-1}
	\quad \text{ and } \quad
	|B_1|=\ldots=|B_{r-2}|=\tfrac{3n}{3r-1}\,,
\]
and whose set of edges is as follows:
\begin{enumerate}
	\item[$\bullet$] there are all edges from a vertex in $A_i$ to a vertex in $A_{i+1}$, 
		where the indices are taken modulo~$5$;
	\item[$\bullet$] all edges from $A_i$ to $B_j$, where $i\in [5]$, $j\in [r-2]$;
	\item[$\bullet$] and all edges from $B_j$ to $B_{j'}$, where $j, j'\in [r-2]$ are 
		distinct.
\end{enumerate}
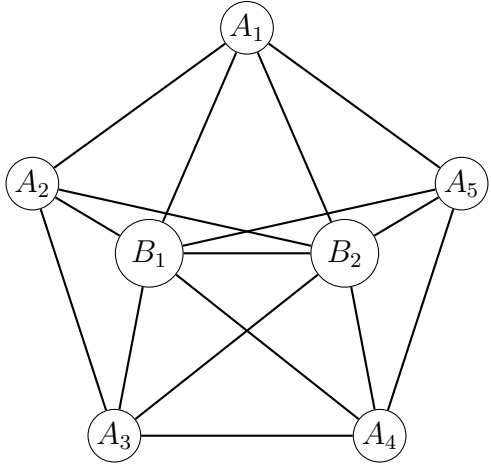
\begin{figure}[ht]
    \centering      
\begin{tikzpicture}[scale = 1]
	\node[circle,fill=white,draw,minimum size=0.7cm, inner sep=0pt] (x1) at (90:3) {$A_1$};
	\node[circle,fill=white,draw,minimum size=0.7cm, inner sep=0pt] (x2) at (162:3) {$A_2$};
	\node[circle,fill=white,draw,minimum size=0.7cm, inner sep=0pt] (x3) at (234:3) {$A_3$};
	\node[circle,fill=white,draw,minimum size=0.7cm, inner sep=0pt] (x4) at (306:3) {$A_4$};
	\node[circle,fill=white,draw,minimum size=0.7cm, inner sep=0pt] (x5) at (18:3) {$A_5$};

	\node[circle,fill=white,draw,minimum size=0.9cm, inner sep=0pt] (y1) at (180:1.3) {$B_1$};
	\node[circle,fill=white,draw,minimum size=0.9cm, inner sep=0pt] (y2) at (0:1.3) {$B_2$};
	
	\path[black,thick]
		(x1) edge (x2)
		(x2) edge (x3)
		(x3) edge (x4)
		(x4) edge (x5)
		(x5) edge (x1)
		(y1) edge (x1)
		(y1) edge (x2)
		(y1) edge (x3)
		(y1) edge (x4)
		(y1) edge (x5)
		(y2) edge (x1)
		(y2) edge (x2)
		(y2) edge (x3)
		(y2) edge (x4)
		(y2) edge (x5)
		(y1) edge (y2);	
\end{tikzpicture}
        \caption{The extremal graph for the case $r=4$ of Theorem~\ref{thm:AES}.}
\end{figure}

\subsection{Weighted graphs}
The concepts and problems discussed in the previous subsection make sense 
in the broader context of weighted graphs as well. For the purposes of this article, these
are defined as follows.

\begin{dfn}
	A {\it weighted graph} is a pair $G=(V, w)$ consisting of a finite {\it vertex set} $V$
	and a symmetric function $w\colon V^2\lra \RR_{\ge 0}$ such that $w(x, x)=0$ holds for
	all $x\in V$. 
\end{dfn}

Here the word ``symmetric'' means that we require $w(x, y)=w(y, x)$ for 
all $x, y\in V$.
The notions of subgraphs, induced subgraphs, and isomorphisms extend in the following way
from ordinary graphs to weighted graphs.

\begin{dfn}
	Let $G=(V, w)$ and $G'=(V', w')$ be two weighted graphs. We say that~$G'$ is a 
	{\it subgraph} of~$G$ if $V'\subseteq V$ and, additionally, $w'(x, y)\le w(x, y)$ 
	holds for all $x, y\in V'$. If this conditions holds with equality throughout 
	we call $G'$ an {\it induced subgraph} of $G$. Finally,~$G$ and~$G'$ are said to be 
	{\it isomorphic} if there is a bijection $\phi\colon V\lra V'$ satisfying 
	$w(x, y)=w'\bigl(\phi(x), \phi(y)\bigr)$ for all $x, y\in V$.
\end{dfn}

For two weighted graphs $F$ and $G$ we say that $G$ is {\it $F$-free} if $G$ does not possess
any subgraph isomorphic to $F$. More generally, if $\ccF$ is a set of weighted graphs
such that $G$ is $F$-free for every $F\in \ccF$, then $G$ is said to be {\it $\ccF$-free}.
The natural analogue of the ``number of edges'' of a weighted graph $G=(V, w)$ is, of course, 
the quantity 
\[
	e(G)=\tfrac 12 \sum_{(x, y)\in V^2}w(x, y)\,.
\]

Now for every such set $\ccF$ of weigthed graphs and every finite set $D\subseteq \RR_{\ge 0}$
one may look at the {\it extremal function} $n\longmapsto \ex_D(n, \ccF)$ sending every 
positive integer $n$ to the maximum of $e(G)$ as $G$ varies over $\ccF$-free weighted 
graphs of order $n$ whose weight function only attains values in $D$. 
The natural generalisation of Tur\'an's problem to this context asks to determine 
these functions for all choices of $\ccF$ and $D$, the classical 
case being $D=\{0, 1\}$. 

Similar as in this case, the {\it generalised Tur\'an densities} 
\begin{equation}\label{eq:wT}
	\pi_D(\ccF)=\lim_{n\to\infty}\frac{\ex_D(n, \ccF)}{n^2/2}
\end{equation}
are easily shown to exist. Questions concerning $\ex_D(n, \ccF)$ and $\pi_D(\ccF)$ are often 
studied in the literature, both for their own sake (see e.g.~\cites{FK02, RoSi95}) and 
due to their connection with other parts of extremal combinatorics. 

For instance, De Caen and F\"{u}redi~\cite{DeFu00} realised that such results 
can be applied to determine the Tur\'an density of the Fano plane. To this end, they 
needed to know the value of $\pi_D(\ccF)$, where $D=\{0, 1, 2, 3, 4\}$ and $\ccF$ denotes 
the set of all corresponding weighted graphs~$F$ on four vertices with $e(F)\ge 21$. 
Their approach is occasionally called the {\it link multigraph method}
and led to many further results on Tur\'an's hypergraph problem 
(see also~\cites{BR, FuSi05, KeMu12, KeSu05, MuRo02}).   

An earlier occurrence of a Tur\'an problem for weighted graphs appeared in the 
determination of the so-called {\it Ramsey-Tur\'an density} of even cliques 
due to Erd\H{o}s, Hajnal, Szemer\'edi, and S\'os~\cite{EHSS}. This result belongs to 
an area initiated by Vera T. S\'os, which is called {\it Ramsey-Tur\'an theory}. Given
a graph $F$, a number $n$ of vertices, and a real number~$m>0$ she defined the 
{\it Ramsey-Tur\'an number} $\RT(n, m, F)$ to be the maximum number of edges that 
an $F$-free graph $G$ on $n$ vertices with $\alpha(G)<m$ can have. The problem 
is especially interesting if $F$ is a clique and it is customary in this setting to 
pass to the {\it Ramsey-Tur\'an density function} $f_t\colon (0, 1)\to \RR$ defined by 
\[
	f_t(\delta)=\lim_{n\to\infty} \frac{\RT(n, \delta n, K_t)}{n^2/2}\,.
\]
A further simplification can be obtained by restricting the attention to the 
{\it Ramsey-Tur\'an densities}
\[
	\rho(K_t)=\lim_{\delta\to 0} f_t(\delta)\,.
\]

Such quantities have been intensively studied in the literature (see 
e.g. \cites{BE76, EHSS, ES69, Sz72} for important milestones and~\cite{SS01} 
for a beautiful survey). Owing to all these efforts it is known that
\begin{equation} \label{eq:rhot}
	\rho(K_t)=\begin{cases}
		\frac{t-3}{t-1} & \text{ if $t\ge 3$ is odd,} \cr
		\frac{3t-10}{3t-4} & \text{ if $t\ge 4$ is even.}
		\end{cases}
\end{equation}
The even case is much harder and in their solution Erd\H{o}s, Hajnal, Szemer\'edi, and S\'os
applied a result on the Tur\'an density of a certain $\{0, 1, 2\}$-valued collection 
$\ccF_t$ of weighted graphs to a reduced graph obtained by means of Szemer\'edi's 
regularity lemma~\cite{Sz78}. These specific families $\ccF_t$ of weighted graphs are 
introduced in Definiton~\ref{dfn:Gti} below.

A few years ago, Fox, Loh, and Zhao~\cite{FLZ15} proved $f_4(\delta)=\frac 14+\Theta(\delta)$. 
In an attempt to generalise some of their arguments to larger even cliques we realised that
the values of the Ramsey-Tur\'an density function $f_t(\delta)$ can be determined explicitly 
for $\delta\ll t^{-1}$. Notably in~\cite{LR-a} we proved that
\[
	f_t(\delta)=\begin{cases}
		\frac{t-3}{t-1}+\delta & \text{ if $t\ge 3$ is odd,} \cr
		\frac{3t-10}{3t-4}+\delta-\delta^2 & \text{ if $t\ge 4$ is even}
		\end{cases}
\]
holds provided that $\delta$ is sufficiently small in a sense depending on $t$.
In order to obtain these precise formulae we needed a stability result in the spirit 
of Theorem~\ref{thm:stab} but for the collections~$\ccF_t$ of weighted graphs
mentioned above (see e.g.~\cite{LR-a}*{Proposition 3.5}). While working on this subject, 
we proved analogues of the Andr\'asfai-Erd\H{o}s-S\'os theorem as well.
They form the main contribution of the present work.

\subsection{Results}    

Throughout the rest of this article we only need to deal with weighted graphs~$G=(V, w)$
satisfying $w[V^2]\subseteq \{0, 1, 2\}$. We regard such structures as {\it coloured} complete
graphs on~$V$ by drawing a {\it green}, {\it blue}, or {\it red} edge between any two distinct
vertices $x, y\in V$ depending on whether $w(x, y)$ attains the value $0$, $1$, or $2$.
For a nonnegative integer $n$ the red and blue $n$-vertex clique are denoted by $RK_n$
and $BK_n$, respectively. Moreover, for $n\ge 2$ we mean by $RK_n^-$ the graph obtained 
from an $RK_n$ by recolouring one of its edges blue. 

\begin{dfn} \label{dfn:Gti}
For two integers $a\ge b\ge 1$ the coloured graph $G_{a+b, b}$ of order $a$
consists of an $RK_{b}$ and a $BK_{a-b}$ that are connected to each other by blue edges. 
Moreover, for every positive integer~$t$ we write 
$\ccF_t=\bigl\{G_{t, i}\colon 1\le i\le \frac t2\bigr\}$.
\end{dfn}

\begin{figure}[ht]
\centering
\begin{subfigure}[b]{0.19\textwidth}
    \centering
	\begin{tikzpicture}
	\node[ellipse,fill=white,draw=blue,thick,minimum width=2cm,minimum height=1.33cm, inner sep=0pt] (a) at (0,0) [] { \textcolor{blue}{{\large $BK_{2r-1}$}}};
	\node[ellipse,fill=white,draw=white,minimum width=0cm,minimum height=0cm, inner sep=0pt] (a) at (0,-1.5) [] {};
	\end{tikzpicture}
        \caption{$G_{2r, 1}$}
    \end{subfigure}
\hfill
    \begin{subfigure}[b]{0.19\textwidth}
    \centering
	\begin{tikzpicture}
	\node[ellipse,fill=white,draw=blue,thick,minimum width=2cm,minimum height=1.33cm, inner sep=0pt] (a) at (0,0) [] { \textcolor{blue}{{\large $BK_{2r-4}$}}};
	\node[circle,fill=black,draw,minimum size=0.1cm, inner sep=0pt] (r1) at (1,-2) [] {};
	\node[circle,fill=black,draw,minimum size=0.1cm, inner sep=0pt] (r2) at (-1,-2) [] {};
\begin{scope}[-,>=latex]	
	\foreach \i in {-3,...,3}{		\draw[-, blue] ([xshift=\i * 0.5 cm]r1) -- ([xshift=\i * 0.5 cm]a) ;}
	\foreach \i in {-3,...,3}{		\draw[-, blue] ([xshift=\i * 0.5 cm]r2) -- ([xshift=\i * 0.5 cm]a) ;}
\end{scope}
	\path[red, thick]
		(r1) edge (r2)	
	;
	\end{tikzpicture}
        \caption{$G_{2r, 2}$}
    \end{subfigure}
\hfill
	\begin{subfigure}[b]{0.19\textwidth}
    \centering
		\begin{tikzpicture}
		\node[ellipse,fill=white,draw=red,thick,minimum width=1.5cm,minimum height=1cm, inner sep=0pt] (a) at (0,0) [] { \textcolor{red}{{\large $RK_{i}$}}};	
		\node[ellipse,fill=white,draw=blue,thick,minimum width=1.5cm,minimum height=1cm, inner sep=0pt] (b) at (0,-2) [] { \textcolor{blue}{{\large $BK_{2r-2i}$}}};
		\begin{scope}[-,>=latex]	
			\foreach \i in {-3,...,3}{			\draw[-, blue] ([xshift=\i * 0.5 cm]b) -- ([xshift=\i * 0.5 cm]a) ;}
		\end{scope}
	\end{tikzpicture}
        \caption{$G_{2r, i}$}
    \end{subfigure}
\hfill
    \begin{subfigure}[b]{0.19\textwidth}
    \centering
	\begin{tikzpicture}
	\node[ellipse,fill=white,draw=red,thick,minimum width=2cm,minimum height=1.33cm, inner sep=0pt] (a) at (0,0) [] { \textcolor{red}{{\large $RK_{r-1}$}}};
	\node[circle,fill=black,draw,minimum size=0.1cm, inner sep=0pt] (b1) at (1,-2) [] {};
	\node[circle,fill=black,draw,minimum size=0.1cm, inner sep=0pt] (b2) at (-1,-2) [] {};
\begin{scope}[-,>=latex]	
	\foreach \i in {-3,...,3}{		\draw[-, blue] ([xshift=\i * 0.5 cm]b1) -- ([xshift=\i * 0.5 cm]a) ;}
	\foreach \i in {-3,...,3}{		\draw[-, blue] ([xshift=\i * 0.5 cm]b2) -- ([xshift=\i * 0.5 cm]a) ;}
\end{scope}
	\path[blue, thick]
		(b1) edge (b2)
	;
	\end{tikzpicture}
        \caption{$G_{2r, r-1}$}
    \end{subfigure}
\hfill
    \begin{subfigure}[b]{0.19\textwidth}
    \centering
	\begin{tikzpicture}
	\node[ellipse,fill=white,draw=red,thick,minimum width=2cm,minimum height=1.33cm, inner sep=0pt] (a) at (0,0) [] { \textcolor{red}{{\large $RK_{r}$}}};
	\node[ellipse,fill=white,draw=white,minimum width=0cm,minimum height=0cm, inner sep=0pt] (a) at (0,-1.5) [] {};
	\end{tikzpicture}
        \caption{$G_{2r, r}$}
    \end{subfigure}

    \caption{The family $\ccF_{2r}=\{G_{2r, 1}, G_{2r, 2}, \ldots, G_{2r, r}\}$.}
\end{figure}

Erd\H{o}s, Hajnal, Szemer\'edi, and S\'os proved in~\cite{EHSS} that
\[
	 \pi_{\{0, 1, 2\}}(\ccF_t)=
		\begin{cases}
			\frac{2(t-3)}{t-1} & \text{ if $t\ge 3$ is odd,} \cr
			\frac{2(3t-10)}{3t-4} & \text{ if $t\ge 4$ is even,}
		\end{cases}
\]
which in turn leads to~\eqref{eq:rhot} via the regularity method for graphs. In order to 
state the related results in the spirit of Theorem~\ref{thm:AES} one needs a notion of 
minimum degree for coloured graphs and it will be useful to have a notion of homomorphisms
as well. 

Now if $G=(V, w)$ denotes a coloured graph and $x\in V$, it is natural to call 
\[
	d(x)=\sum_{y\in V}w(x, y)
\]
the {\it degree} of $x$. Moreover, the quantity $\delta(G)=\min\{d(v)\colon v\in V\}$
will be referred to as the {\it minimum degree} of $G$.

\begin{dfn}
A {\it homomorphism} from a weighted graph $G=(V, w)$ to a weighted graph $G'=(V', w')$
is a map $\phi\colon V\lra V'$ with the property that any two distinct vertices 
$x, y\in V$ satisfy $w(x, y)\le w'\bigl(\phi(x), \phi(y)\bigr)$.
\end{dfn} 

For odd indices, we shall obtain the following.
 
\begin{thm} \label{thm:main-odd}
	Suppose that for some $r\ge 2$ we have an $\ccF_{2r+1}$-free coloured graph $G$ 
	of order $n$ with $\delta(G)>\frac{6r-8}{3r-1}n$. Then there is a homomorphism from $G$ 
	to $RK_r$ or, explicitly, there is a partition 
		\[
		V(G)=W_1\dcup\ldots\dcup W_r
	\]
		such that all edges within the partition classes are green.
\end{thm}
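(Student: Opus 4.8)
The natural strategy is to mirror the Andr\'asfai--Erd\H{o}s--S\'os argument in the weighted setting, using the density theorem of Erd\H{o}s, Hajnal, Szemer\'edi, and S\'os to locate a suitable ``template'' clique. First I would observe that an $\ccF_{2r+1}$-free coloured graph contains no $RK_{r+1}$ (that is $G_{2r+1,r+1}$? no---let me recheck indices: $\ccF_{2r+1}=\{G_{2r+1,i}\colon 1\le i\le r\}$, so the relevant forbidden members include $G_{2r+1,r}$, which is an $RK_r$ together with one blue vertex joined by blue edges, i.e.\ an ``$RK_r^{+}$''). The key local consequence of $\ccF_{2r+1}$-freeness that I want to extract is: \emph{no vertex $v$ sees an $RK_r$ in red in its neighbourhood, and more generally the red neighbourhoods interact with the structure in a controlled way.} So the first step is to carefully translate ``$\ccF_{2r+1}$-free'' into a handful of purely combinatorial exclusion rules about the green/blue/red colouring.

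Second, I would set up the skeleton of the partition. The right analogue of ``take an $r$-clique and look at the joint neighbourhoods of its $(r-1)$-subsets'' is: take a red $RK_r$ in $G$ (its existence should follow from the density bound $\delta(G)>\frac{6r-8}{3r-1}n$ via the $\pi_{\{0,1,2\}}$ value, or more cheaply by an averaging/greedy argument using the minimum degree), say on vertices $\{u_1,\dots,u_r\}$, and for each $i$ let $\widetilde W_i$ be the set of vertices $x$ that are joined \emph{in red} to all $u_j$ with $j\ne i$. The exclusion rules from step one should force: (a) the $\widetilde W_i$ are pairwise disjoint; (b) inside each $\widetilde W_i$ all edges are green (a blue or red edge inside $\widetilde W_i$ would, together with the red edges to the other $u_j$, build a forbidden $G_{2r+1,\bullet}$); and (c) edges between $\widetilde W_i$ and $\widetilde W_j$ for $i\ne j$ are red. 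Then I would use the minimum degree hypothesis, exactly as in the classical proof, to show $\bigcup_i \widetilde W_i$ misses only a few vertices, and finally assign each leftover vertex to some class so that green-ness inside classes is preserved---this last assignment step is where one needs an Andr\'asfai-type argument (a leftover vertex cannot have a non-green edge into \emph{every} class, or one derives a forbidden configuration), and this is also where the precise constant $\frac{6r-8}{3r-1}$ must be used.

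Third, regarding the constant: note $\frac{6r-8}{3r-1}=2\cdot\frac{3r-4}{3r-1}$, so the bound is exactly twice the classical Andr\'asfai--Erd\H{o}s--S\'os threshold, which strongly suggests a reduction to the simple-graph theorem applied to the ``red graph'' $G_{\mathrm{red}}=(V,E_{\mathrm{red}})$. Indeed, if $\ccF_{2r+1}$-freeness implies $G_{\mathrm{red}}$ is $K_{r+1}$-free and the weighted minimum degree $\delta(G)>\frac{6r-8}{3r-1}n$ forces the red-degree to be large on a large induced subgraph (after peeling off vertices of small red-degree, controlling the damage via the fact that every non-red edge has weight $\le 1$), then Theorem~\ref{thm:AES} gives an $r$-colouring of the red graph, i.e.\ a partition into red-independent sets; it then remains to upgrade ``red-independent'' to ``green'' by ruling out blue edges inside classes using $\ccF_{2r+1}$-freeness again. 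I would try the direct argument of the second paragraph first, and fall back on this reduction if the bookkeeping there gets unwieldy.

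\textbf{Main obstacle.}
I expect the crux to be the handling of \emph{blue} edges, which carry weight $1$ and are therefore invisible to a naive ``red graph'' reduction yet count toward the degree. Concretely: after one has a partition into red-independent sets, a class may still contain blue edges, and a stray vertex may be joined to some class entirely by blue edges (contributing a full unit of degree without any red edge). Excluding these requires a genuinely multicoloured Andr\'asfai-type lemma---something like ``if $x$ has, into each of the $r$ classes, an edge that is not green, then among these $r$ non-green edges together with the class structure one can embed some $G_{2r+1,i}$''---and making that embedding work for \emph{every} value of $i$ in the forbidden family, with the degree arithmetic tight, is the delicate part. A secondary nuisance is proving the template $RK_r$ exists and that the peeling process to boost the minimum degree does not destroy too much weight; these are routine but must be done with the factor-of-two relationship between the thresholds kept in view.
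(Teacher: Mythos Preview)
Your ``red graph'' reduction does not give enough minimum degree. From $d_G(v)=2d_{\mathrm{red}}(v)+d_{\mathrm{blue}}(v)$ and $d_{\mathrm{blue}}(v)\le n-1-d_{\mathrm{red}}(v)$ one only gets $d_{\mathrm{red}}(v)>\tfrac{3r-7}{3r-1}\,n$, which is below the Andr\'asfai--Erd\H os--S\'os threshold; peeling low-red-degree vertices cannot recover the gap, since the deficit is a constant fraction of $n$. The factor-of-two relationship you noticed is real, but it points to a different auxiliary graph: let $H$ be the simple graph whose edges are the \emph{non-green} pairs of $G$. Then $d_G(v)\le 2d_H(v)$, so $\delta(H)>\tfrac{3r-4}{3r-1}\,n$ on the nose, and an $r$-colouring of $H$ is precisely a partition of $V(G)$ into green cliques---the ``upgrade from red-independent to all-green'' step you flag as the main obstacle simply evaporates.

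What is genuinely missing from your plan is the hypothesis one must feed into Theorem~\ref{thm:AES}: that $H$ is $K_{r+1}$-free, i.e.\ that $G$ contains no $BK_{r+1}$. This is not immediate from $\ccF_{2r+1}$-freeness, because the smallest blue clique in $\ccF_{2r+1}$ is $G_{2r+1,1}=BK_{2r}$; ruling out blue cliques of orders $r+1,\dots,2r-1$ requires the minimum-degree hypothesis together with the \emph{other} members $G_{2r+1,i}$ of the family. The paper handles this by an iterative lemma (Lemma~\ref{lem:blue}): if $BK_q\subseteq G$ with $r+1\le q\le 2r-1$, then the degree condition forces either $BK_{q+1}\subseteq G$ or $G_{2r+1,\,2r+1-q}\subseteq G$, and one climbs until a forbidden graph appears. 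This step is the actual content of the odd case; once it is in place the reduction to Theorem~\ref{thm:AES} via $H$ is two lines. Your template-$RK_r$ approach, by contrast, already stumbles on establishing that an $RK_r$ exists at all, and even granting one, an averaging argument over the $u_j$ does not separate vertices in $\bigcup_i\widetilde W_i$ from those outside (both contribute at most $2r-2$ to $\sum_j w(x,u_j)$), so the covering step does not go through as stated.
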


Consider the coloured graph obtained from the extremal graph described in 
Subsection~\ref{subsec:simple} by replacing the edges there by red edges 
and colouring all other pairs green. This coloured graph has a minimum degree
of exactly $\frac{6r-8}{3r-1}n$ but, as it does not contain a $BK_{r+1}$, it cannot 
contain a member of $\ccF_{2r+1}$ either. On the other hand, it does not admit
a homomorphism to $RK_r$ and thus it shows that the constant $\frac{6r-8}{3r-1}$
appearing in Theorem~\ref{thm:main-odd} is optimal. Let us also note that the extremal
graphs for $\pi_{\{0, 1, 2\}}(\ccF_{2r+1})=\frac{2(r-1)}{r}$ are $RK_r$ and its symmetric 
blow-ups, which is why we aimed to get a homomorphism into $RK_r$ in the conclusion
of Theorem~\ref{thm:main-odd}. 

As in Ramsey-Tur\'an theory, the even case will be much harder. Let us recall that 
in~\cite{EHSS} the extremal graphs for $\pi_{\{0, 1, 2\}}(\ccF_{2r})=\frac{2(3r-5)}{3r-2}$
have been determined to be certain blow-ups of $RK_r^-$. (E.g., the two ``special'' vertices 
get blown up by a factor of $2$, while the $r-2$ remaining vertices receive a factor of $3$). 
Therefore, our goal is to enforce, by an appropriate minimum degree condition, that an 
$\ccF_{2r}$-free coloured graph admits a homomorphism into~$RK_r^-$.

\begin{thm} \label{thm:main-even}
	Let $r\ge 3$ be an integer and let $G$ be a $\ccF_{2r}$-free coloured graph of order~$n$ 
	with $\delta(G)>\tfrac{14r-24}{7r-5}n$. Then there is a homomorphism from $G$ to $RK_r^-$.
	In other words, there is a partition 
		\[
		V(G)=W_1\dcup\ldots\dcup W_r
	\]
		such that all edges within the partition classes are green and no 
	edge from $W_1$ to $W_2$ is red.
\end{thm}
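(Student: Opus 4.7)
The plan is to adapt the joint-neighbourhood strategy behind the classical proofs of Theorems~\ref{thm:stab} and~\ref{thm:AES} to the weighted setting and to the asymmetric target graph~$RK_r^-$. I write $R=\{xy\colon w(x,y)=2\}$ for the red subgraph and $H=\{xy\colon w(x,y)\ge 1\}$ for the blue-or-red subgraph. A short combinatorial reformulation shows that $\ccF_{2r}$-freeness is equivalent to the statement that every $H$-clique $K$ whose largest red sub-clique has size~$k$ satisfies $|K|+k\le 2r-1$; in particular $R$ is $K_r$-free. A homomorphism $G\to RK_r^-$ is the same as a partition $V(G)=W_1\dcup\cdots\dcup W_r$ in which every $W_i$ is $H$-independent and $R$ has no edge between $W_1$ and $W_2$.

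The first task is to locate a copy of $RK_r^-$ inside~$G$ to serve as an anchor. Using the minimum-degree hypothesis together with the structural constraints above, I would extract a red~$K_{r-2}$ on some vertices $\{u_3,\ldots,u_r\}$ whose joint red-neighbourhood
\[
N=\bigl\{v\in V(G)\sm\{u_3,\ldots,u_r\}\colon w(v,u_j)=2 \text{ for every } 3\le j\le r\bigr\}
\]
is of linear size. Inside $N$ there is no red edge, because a red edge completed by $\{u_3,\ldots,u_r\}$ produces an $RK_r\in\ccF_{2r}$. A counting/averaging step using the minimum degree then locates a blue edge $u_1u_2$ inside~$N$, completing the anchor $\{u_1,\ldots,u_r\}\cong RK_r^-$. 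The degenerate case where $N$ is entirely green-complete has to be treated separately, either by reducing to Theorem~\ref{thm:main-odd} applied to a quotient or by a direct construction of the partition.

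With the anchor fixed, define for each $i\in[r]$ the candidate class
\[
\widetilde{W}_i=\bigl\{v\colon w(v,u_j)=w(u_i,u_j) \text{ for all } j\in[r]\sm\{i\}\bigr\}.
\]
A direct use of $\ccF_{2r}$-freeness in the $|K|+k\le 2r-1$ form yields:
(a) the $\widetilde{W}_i$ are pairwise disjoint;
(b) each $\widetilde{W}_i$ is green-independent, because a non-green edge $xy$ inside $\widetilde{W}_i$ together with $\{u_j\colon j\ne i\}$ gives an $H$-clique of size $r+1$ whose largest red sub-clique has size $r-1$, violating the bound;
(c) no red edge joins $\widetilde{W}_1$ to $\widetilde{W}_2$, since such an edge together with $\{u_3,\ldots,u_r\}$ forms a red~$K_r$.
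The minimum-degree hypothesis is then used to show that $\widetilde{W}_1\cup\cdots\cup\widetilde{W}_r$ covers $V(G)$ up to an exceptional set, and each exceptional vertex is absorbed into a suitable $W_i\supseteq\widetilde{W}_i$ via a further case analysis based on~$\ccF_{2r}$-freeness.

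The hardest step is expected to be the absorption of exceptional vertices: since $\tfrac{14r-24}{7r-5}$ is the sharp constant, each exceptional vertex must admit exactly one class assignment compatible with (a)--(c), and this requires combining several instances of the $|K|+k\le 2r-1$ constraint with precise numerical estimates. A secondary obstacle is locating the blue edge $u_1u_2$ inside~$N$: ruling out both an empty blue graph on $N$ and, more generally, an odd cycle in the blue graph on~$N$ (which would obstruct the bipartition of $\widetilde{W}_{12}=\widetilde{W}_1\cup\widetilde{W}_2$) is a delicate calibration that reflects the asymmetry of the target graph~$RK_r^-$ and distinguishes the even case from the odd one.
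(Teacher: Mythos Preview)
Your plan follows the joint-neighbourhood/absorption paradigm of the classical proofs, and it is genuinely different from what the paper does. The paper first replaces $G$ by an edge-maximal (``extremal'') $\ccF_{2r}$-free supergraph---a harmless reduction, since any homomorphism of the supergraph restricts to one of $G$---and then proves that an extremal graph contains no \emph{wicked triangle}, i.e.\ no triple $(x,y,z)$ with $xy$ red but $xz,yz\in\{\text{green},\text{blue}\}$. This is Lemma~\ref{lem:wicked}, prepared by showing that every non-red edge lies in the common red neighbourhood of some $RK_{r-2}$ (Lemmas~\ref{lem:blue-secure} and~\ref{lem:green-secure}) and by excluding the auxiliary configuration~$J$ of Definition~\ref{dfn:J} (Lemma~\ref{lem:exclude-J}). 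The upshot is that ``$w(x,y)\le 1$'' is an equivalence relation on~$V$, so the vertex set splits \emph{automatically} into classes with only red edges between distinct classes; a short counting argument pins down the number of classes, and the $r=2$ case of Theorem~\ref{thm:AES} bipartitions the blue graph inside each class. No anchor is fixed and no exceptional vertices arise.

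The genuine gap in your outline is the absorption step, which you correctly flag as the crux but leave as a promise. Because $\tfrac{14r-24}{7r-5}$ is sharp there is no slack: already in graphs that \emph{do} admit a homomorphism to $RK_r^-$, your exact-profile classes $\widetilde W_i$ miss, for instance, every vertex of $W_1$ whose edge to $u_2$ happens to be green rather than blue, and relaxing the profiles immediately costs you disjointness. Assigning such a vertex correctly amounts to showing that it forms no wicked triangle with two anchor vertices and that the non-red relation is locally transitive around it---so pushing your plan through would force you to establish essentially Lemma~\ref{lem:wicked}, after which the anchor and the $\widetilde W_i$ are redundant. Two smaller points: you give no argument for the existence of the anchor (a red $K_{r-2}$ with a blue edge in its joint red neighbourhood does not follow directly from the degree hypothesis; the paper obtains $RK_{r-2}$'s only as a by-product of extremality via the ``secure edge'' notion); and in~(b) you mean that each $\widetilde W_i$ is a green clique, i.e.\ $H$-independent, not ``green-independent''.
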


\begin{figure}[ht]
\centering
\begin{tikzpicture}
	\node[circle,fill=white,draw,minimum size=0.08cm, inner sep=0pt] (a) at (340:3.25) [label=340:$B''$] {$\frac{6n}{7r-5}$};
	\node[circle,fill=white,draw,minimum size=0.08cm, inner sep=0pt] (b) at (200:3.25) [label=240:$B'$] {$\frac{6n}{7r-5}$};
	\node[circle,fill=white,draw,minimum size=0.08cm, inner sep=0pt] (e) at (20:3.25) [label=35: $A_{r-3}$] {$\frac{7n}{7r-5}$};
	\node[circle,fill=white,draw,minimum size=0.08cm, inner sep=0pt] (f) at (160:3.25) [label=145: $A_{1}$] {$\frac{7n}{7r-5}$};
	\node[circle,fill=white,draw,minimum size=0.08cm, inner sep=0pt] (c) at (250:3.25) [label=250:$C'$] {$\frac{2n}{7r-5}$};
	\node[circle,fill=white,draw,minimum size=0.08cm, inner sep=0pt] (d) at (290:3.25) [label=290:$C''$] {$\frac{2n}{7r-5}$};
	\node[circle,fill=black,draw,minimum size=0cm, inner sep=0pt] (v1) at (45:3.25) {};
	\node[circle,fill=black,draw,minimum size=0cm, inner sep=0pt] (v2) at (90:3.25) {};
	\node[circle,fill=black,draw,minimum size=0cm, inner sep=0pt] (v3) at (135:3.25) {};
	\node[circle,fill=white,draw,minimum size=0cm, inner sep=0pt] (w1) at (67.5:3.25) {};
	\node[circle,fill=white,draw,minimum size=0cm, inner sep=0pt] (w2) at (112.5:3.25) {};
	\path[red,thick]
		(a) edge (b)
		(a) edge (c)
		(a) edge (d)
		(a) edge (e)
		(a) edge (f)
		(b) edge (c)
		(b) edge (d)
		(b) edge (e)
		(b) edge (f)
		(c) edge (e)
		(c) edge (f)
		(d) edge (e)
		(d) edge (f)
		(e) edge (f)
		(w1) edge (a)
		(w1) edge (b)
		(w1) edge (c)
		(w1) edge (d)
		(w1) edge (e)
		(w1) edge (f)
		(w2) edge (a)
		(w2) edge (b)
		(w2) edge (c)
		(w2) edge (d)
		(w2) edge (e)
		(w2) edge (f)
		(v1) edge (a)
		(v1) edge (b)
		(v1) edge (c)
		(v1) edge (d)
		(v1) edge (e)
		(v1) edge (f)
		(v2) edge (a)
		(v2) edge (b)
		(v2) edge (c)
		(v2) edge (d)
		(v2) edge (e)
		(v2) edge (f)
		(v3) edge (a)
		(v3) edge (b)
		(v3) edge (c)
		(v3) edge (d)
		(v3) edge (e)
		(v3) edge (f)
		(v1) edge (v2)
		(v1) edge (v3)
		(v1) edge (w1)
		(v1) edge (w2)
		(v2) edge (v3)
		(v2) edge (w1)
		(v2) edge (w2)
		(v3) edge (w1)
		(v3) edge (w2)
		(v2) edge (w2)
		(w1) edge (w2)
	;
	\path[blue,thick]
		(c) edge (b)
		(d) edge (a)
	;
	\path[green,thick]
		(c) edge (d)
	;
\end{tikzpicture}
        \caption{Extremal graph for Theorem~\ref{thm:main-even}.}
        \label{bigJ}
\end{figure}
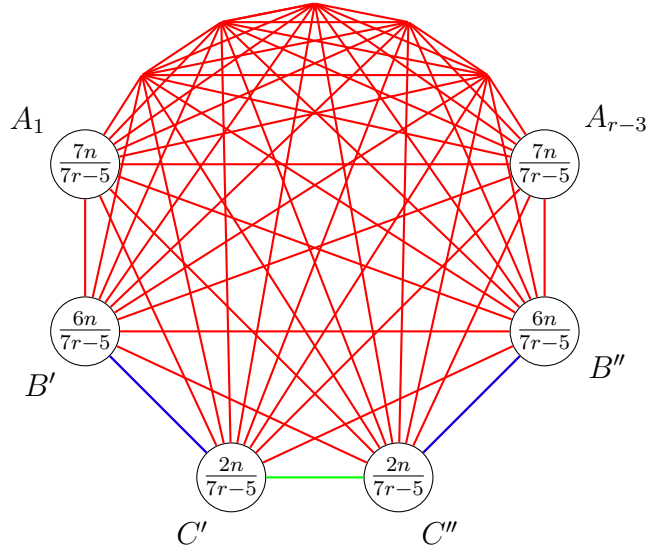

The reason why we stated this only for $r\ge 3$ is that for $r=2$ one can easily show 
a stronger result. This is because $\ccF_2$ consists only of a red edge and a blue triangle.
Hence a direct application of the case $r=2$ of Theorem~\ref{thm:AES} shows that the desired 
conclusion can already be obtained from the weaker minimum degree assumption that 
$\delta(G)>\tfrac 25n$. For $r\ge 3$, however, the constant $\frac{14r-24}{7r-5}$ appearing 
in Theorem~\ref{thm:main-even} is optimal. This can be seen by taking $n$ to be an arbitrary 
multiple of $7r-5$, a vertex set $V$ of size $n$ with a partition
\[
	V=A_1\dcup \ldots \dcup A_{r-3} \dcup B'\dcup B''\dcup C'\dcup C''
\]
satisfying 
\[
	|A_1|=\ldots=|A_{r-3}|=\tfrac{7n}{7r-5}, \quad
	|B'|=|B''|=\tfrac{6n}{7r-5}, \quad 
	\text{ and } \quad |C'|=|C''|=\tfrac{2n}{7r-5}\,,
\]
and colouring  
\begin{enumerate}
	\item[$\bullet$] the edges within the partition classes green,  
	\item[$\bullet$] the edges from $C'$ to $C''$ green as well,
	\item[$\bullet$] the edges from $B'$ to $C'$ and from $B''$ to $C''$ blue, 
	\item[$\bullet$] and all remaining edges red.
\end{enumerate}

We would like to remark that whenever a weighted Tur\'an density $\pi_D(\ccF)$
and the corresponding family $\mathscr{E}$ of extremal graphs have been determined 
one may ask, similarly, for the {\it Andr\'asfai-Erd\H{o}s-S\'os threshold} $\alpha_D(\ccF)$, 
defined to be the infimal real number $\alpha$ with the following property: 
Every $\ccF$-free weighted graph $(V, w)$ with $w[V^2]\subseteq D$ and $\delta(G) > \alpha |V|$ admits an homomorphism into a member of $\mathscr{E}$. For instance, Theorem~\ref{thm:main-even}
and the graph in Figure~\ref{bigJ} show $\alpha_{\{0, 1, 2\}}(\ccF_{2r})=\frac{14r-24}{7r-5}$
for $r\ge 3$. It would be interesting to study such thresholds $\alpha_D(\ccF)$ in  
further cases, e.g. for the pairs $(D, \ccF)$ whose Tur\'an densities have been determined
in~\cite{FK02}.

\section{Excluding blue cliques}

Many intermediate steps in the proofs of our main results are of the 
following form: We already know that the coloured graph $G$ under consideration 
is $\ccF$-free for some set $\ccF$ of coloured graphs  
and we would like to show that for a certain other coloured graph~$F$ 
it must be the case that~$G$ is $F$-free as well. The usual strategy for handling 
such a problem begins by assigning a positive integral {\it weight} $\gamma_z$ to 
every $z\in V(F)$. Assuming for simplicity 
that~$F$ itself would be a subgraph of~$G$ we obtain
\[
	\sum_{x\in V}\sum_{z\in V(F)} \gamma_z w(x, z)
	=\sum_{z\in V(F)}\gamma_z d_G(z)
	\ge \delta(G) \sum_{z\in V(F)}\gamma_z\,.
\]
Consequently there will exist some vertex $x\in V$ such that
\begin{align}\label{eq:xnew}
	\sum_{z\in V(F)} \gamma_z w(x, z) \ge \frac{\delta(G)}{n} \sum_{z\in V(F)}\gamma_z\,.
\end{align}

The basic plan to proceed from this point is that we try to prove by means of some case analysis
that this conditions implies $V(F)\cup\{x\}$ to support some member of~$\ccF$, contrary 
to $G$ being $\ccF$-free. 
Often the situation will be a bit more complicated and we will need to iterate this argument 
multiple times before such a contradiction emerges. For analysing the condition~\eqref{eq:xnew} 
it is usually helpful to rewrite it in terms of the function
$\wq\colon V^2\lra \{0, 1, 2\}$ defined by $\wq(u, v)=2-w(u, v)$ for all $u, v\in V$.
In fact one can easily check that~\eqref{eq:xnew} is equivalent to 
\begin{align}\label{eq:xnew+}
	  \sum_{z\in V(F)} \gamma_z \wq(x, z) 
	  \le \left(2-\frac{\delta(G)}{n}\right) \sum_{z\in V(F)}\gamma_z\,.
\end{align}

Our first argument of this form will establish the following lemma, which will later 
be used to show that a coloured graph $G$ satisfying the assumption of either 
Theorem~\ref{thm:main-odd} or Theorem~\ref{thm:main-even} cannot contain a $BK_{r+1}$ 
(see Lemma~\ref{lem:bkr1} and Lemma~\ref{lem:bkr2} below).

\begin{lemma}\label{lem:blue}
	Let $q > b\ge 1$ be integers and suppose that $G$ is a coloured 
	graph on~$n$ vertices with $\delta(G)>\bigl(2-\frac{12}{3q+3b-5}\bigr)n$
	containing a $BK_q$.
	Then either $BK_{q+1}$ or $G_{q+b, b}$ is a subgraph of $G$.
\end{lemma}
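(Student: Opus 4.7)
My plan is to induct on $b$, keeping $q>b$ arbitrary. The base case $b=1$ is immediate: the graph $G_{q+1,1}$ consists of a single $RK_1$ vertex joined by blue edges to a $BK_{q-1}$, which is simply a $BK_q$, and this is provided by the hypothesis.

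For the inductive step with $b\ge 2$, the threshold $\tfrac{12}{3q+3b-5}$ is strictly smaller than $\tfrac{12}{3q+3(b-1)-5}$, so the inductive hypothesis applies with parameter $b-1$ and delivers either a $BK_{q+1}\subseteq G$ (in which case we are done) or a copy of $G_{q+b-1,b-1}$ in $G$. In the latter situation I write $V(G_{q+b-1,b-1})=X\dcup Q'$, where $|X|=b-1$ forms the $RK_{b-1}$ and $|Q'|=q-b+1$ forms the $BK_{q-b+1}$, with the edges between $X$ and $Q'$ all blue.

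Next I would apply inequality~\eqref{eq:xnew+} to $F=X\cup Q'$, with weights $\gamma_z$ chosen to penalise non-red edges from $x$ to $X$ more heavily than green edges from $x$ to $Q'$ (for instance by taking $\gamma_z$ proportional to $4$ on $X$ and to $1$ on $Q'$, though the exact scaling should be optimised). The resulting vertex $x\in V$ has a restricted colour pattern on $V(F)$: in the principal good case $x$ is red to every vertex of $X$ and green to at most one vertex of $Q'$, so that $X\cup\{x\}$ is an $RK_b$ and any $q-b$ non-green vertices of $Q'$ serve as the required $BK_{q-b}$, producing $G_{q+b,b}\subseteq G$. In each residual case, where some edge from $x$ to $X$ is blue or green, or two edges from $x$ to $Q'$ are green, I would either swap a vertex of $X$ with $x$ to obtain a new $G_{q+b-1,b-1}$ with more favourable structure, or locate a $BK_{q+1}$ inside $V(F)\cup\{x\}$, possibly aided by the original $BK_q\subseteq G$.

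The chief obstacle is the calibration of the weights so that the threshold $\tfrac{12}{3q+3b-5}$ is tight against the bad colour patterns: naive uniform weights on $V(F)$ already force $x$ to have at most one green edge to $V(F)$ (the ``two greens'' contribution of $4$ exceeds $\tfrac{12q}{3q+3b-5}<4$ whenever $b\ge 2$), but they do not rule out a blue edge from $x$ to $X$, whereas sharply asymmetric weights inflate $\sum_z\gamma_z$ at a rate that competes with the threshold. It is therefore plausible that the argument must invoke~\eqref{eq:xnew+} several times with different weight vectors, or iterate the swapping step, before the desired contradiction with $\ccF$-freeness emerges.
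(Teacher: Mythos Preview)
Your inductive framework is set up correctly—the base case $b=1$ holds and the threshold for $b$ does imply the one for $b-1$—but the inductive step has a genuine gap that a single weight application cannot close. From a copy of $G_{q+b-1,b-1}$ with parts $X$ (the $RK_{b-1}$) and $Q'$ (the $BK_{q-b+1}$) you need the new vertex $x$ to be red to all of $X$ and green to at most one vertex of $Q'$. With weights $\alpha$ on $X$ and $\beta$ on $Q'$, excluding the pattern ``one green into $X$, rest red'' requires $\alpha/\beta\ge\tfrac{6(q-b+1)}{3(q-b)+1}>2$, whereas excluding ``two greens into $Q'$, rest red'' requires $\alpha/\beta\le 2-\tfrac{2}{3(b-1)}<2$; no weighting satisfies both, so at least one bad pattern always survives. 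Your swapping idea does not terminate as stated either: replacing a green $X$-neighbour by $x$ just returns another $G_{q+b-1,b-1}$ with no recorded measure of progress.

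The paper avoids this by tracking more structure. Instead of inducting on $b$ through the graphs $G_{q+k,k}$, it defines intermediate configurations $H_k$ (for $0\le k\le b-1$) on $q$ vertices with a three-part split $A\dcup B\dcup C$, where the $RK_k$-part $A$ is joined to $C$ by \emph{red} edges and to a buffer $B$ by blue ones. Since $H_0=BK_q$ and $H_{b-1}$ already contains $G_{q+b,b}$, one takes the maximal $k_*$ with $H_{k_*}\subseteq G$ and shows with one weight application (case-dependent on $k_*$) that $H_{k_*+1}\subseteq G$, contradicting maximality. The extra red $A$--$C$ edges are precisely what make the calibration work: the analogue of your ``one stray blue into $X$'' is absorbed by enlarging the buffer $B$, and the maximality of $k_*$ supplies the termination that your swapping lacks. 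If you strengthened your inductive hypothesis to record how many vertices of the red clique are red, rather than merely blue, to the blue part, you would essentially rediscover this $H_k$ iteration.
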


\begin{proof}
	Assume contrariwise that $G$ is $\{BK_{q+1}, G_{q+b, b}\}$-free. 
	For each integer $k\in [0, b-1]$ we define 
		\[
		p_k=\max(0, k+q+1-2b)\,.
	\]
	In view of
		\begin{align}\label{eq:Cnonempty}
		k+p_k=\max(k, 2k+q+1-2b) \le \max(b-1, q-1)< q
	\end{align}
	there exists a coloured graph $H_k$ of order $q$ having a vertex partition 
	$V(H_k)=A\dcup B\dcup C$ satisfying
	\begin{enumerate}
	\item[$\bullet$] $|A|=k$, $|B|=p_k$, $|C|=q-(k+p_k)$,
	\item[$\bullet$] all edges of $H_k$ connecting a vertex in $A$ with a vertex in 
		$A\cup C$ are red,
	\item[$\bullet$] and all other edges of $H_k$ are blue.
	\end{enumerate}

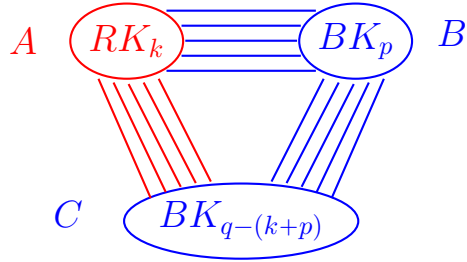
\begin{figure}[ht]
\centering
\begin{tikzpicture}
	\node[ellipse ,fill=white,draw=red,thick,minimum width=1.5cm,minimum height=1cm, inner sep=0pt][] (a) at (162:1.6) { \textcolor{red}{{\large $RK_{k}$}}};	
	\node[ellipse,fill=white,draw=blue,thick,minimum width=1.5cm,minimum height=1cm, inner sep=0pt][] (b) at (18:1.6) { \textcolor{blue}{{\large $BK_{p}$}}};
	\node[ellipse,fill=white,draw=blue,thick,minimum width=2cm,minimum height=1cm, inner sep=0pt][] (c) at (-90:1.9) { \textcolor{blue}{{\large $BK_{q-(k+p)}$}}};
	\node at (-2.9,0.5) {\textcolor{red}{\large $A$}};
	\node at (2.8,0.6) {\textcolor{blue}{\large $B$}};
	\node at (-2.3,-1.8) {\textcolor{blue}{\large $C$}};
	\node[circle,draw=none,fill=none,minimum size=0cm,inner sep=0pt] (a1) at (-1.1,0) {};
	\node[circle,draw=none,fill=none,minimum size=0cm,inner sep=0pt] (a2) at (-1.3,-0.05) {};
	\node[circle,draw=none,fill=none,minimum size=0cm,inner sep=0pt] (a3) at (-1.5,-0.07) {};
	\node[circle,draw=none,fill=none,minimum size=0cm,inner sep=0pt] (a4) at (-1.7,-0.05) {};
	\node[circle,draw=none,fill=none,minimum size=0cm,inner sep=0pt] (a5) at (-1.9,0) {};
	\node[circle,draw=none,fill=none,minimum size=0cm,inner sep=0pt] (a6) at (-1,0.9) {};
	\node[circle,draw=none,fill=none,minimum size=0cm,inner sep=0pt] (a7) at (-0.8,0.7) {};
	\node[circle,draw=none,fill=none,minimum size=0cm,inner sep=0pt] (a8) at (-0.75,0.5) {};
	\node[circle,draw=none,fill=none,minimum size=0cm,inner sep=0pt] (a9) at (-0.8,0.3) {};
	\node[circle,draw=none,fill=none,minimum size=0cm,inner sep=0pt] (a10) at (-1,0.1) {};
	\node[circle,draw=none,fill=none,minimum size=0cm,inner sep=0pt] (b1) at (1,0.9) {};
	\node[circle,draw=none,fill=none,minimum size=0cm,inner sep=0pt] (b2) at (0.8,0.7) {};
	\node[circle,draw=none,fill=none,minimum size=0cm,inner sep=0pt] (b3) at (0.75,0.5) {};
	\node[circle,draw=none,fill=none,minimum size=0cm,inner sep=0pt] (b4) at (0.8,0.3) {};
	\node[circle,draw=none,fill=none,minimum size=0cm,inner sep=0pt] (b5) at (1,0.1) {};
	\node[circle,draw=none,fill=none,minimum size=0cm,inner sep=0pt] (b6) at (1.1,0) {};
	\node[circle,draw=none,fill=none,minimum size=0cm,inner sep=0pt] (b7) at (1.3,-0.05) {};
	\node[circle,draw=none,fill=none,minimum size=0cm,inner sep=0pt] (b8) at (1.5,-0.07) {};
	\node[circle,draw=none,fill=none,minimum size=0cm,inner sep=0pt] (b9) at (1.7,-0.05) {};
	\node[circle,draw=none,fill=none,minimum size=0cm,inner sep=0pt] (b10) at (1.9,0) {};
	\node[circle,draw=none,fill=none,minimum size=0cm,inner sep=0pt] (c1) at (-0.4,-1.38) {};
	\node[circle,draw=none,fill=none,minimum size=0cm,inner sep=0pt] (c2) at (-0.6,-1.4) {};
	\node[circle,draw=none,fill=none,minimum size=0cm,inner sep=0pt] (c3) at (-0.8,-1.45) {};
	\node[circle,draw=none,fill=none,minimum size=0cm,inner sep=0pt] (c4) at (-1,-1.52) {};
	\node[circle,draw=none,fill=none,minimum size=0cm,inner sep=0pt] (c5) at (-1.2,-1.6) {};
	\node[circle,draw=none,fill=none,minimum size=0cm,inner sep=0pt] (c6) at (0.4,-1.38) {};
	\node[circle,draw=none,fill=none,minimum size=0cm,inner sep=0pt] (c7) at (0.6,-1.4) {};
	\node[circle,draw=none,fill=none,minimum size=0cm,inner sep=0pt] (c8) at (0.8,-1.45) {};
	\node[circle,draw=none,fill=none,minimum size=0cm,inner sep=0pt] (c9) at (1,-1.52) {};
	\node[circle,draw=none,fill=none,minimum size=0cm,inner sep=0pt] (c10) at (1.2,-1.6) {};
	\path[red,thick]
		(a1) edge (c1)
		(a2) edge (c2)
		(a3) edge (c3)
		(a4) edge (c4)
		(a5) edge (c5)
	;
	\path[blue,thick]
		(a6) edge (b1)
		(a7) edge (b2)
		(a8) edge (b3)
		(a9) edge (b4)
		(a10) edge (b5)
		(c6) edge (b6)
		(c7) edge (b7)
		(c8) edge (b8)
		(c9) edge (b9)
		(c10) edge (b10)
	;
\end{tikzpicture}
\caption{The coloured graph $H_k$}
\end{figure}

	Since all edges of $H_0$ are blue and $V(H_0)$ has size $q$, we know that $G$ contains 
	a copy of~$H_0$. Now let $k_*$ denote the largest integer in $[0, b-1]$ with the 
	property that~$G$ contains a copy of $H_{k_*}$ and put $p_*=p_{k_*}$. 

	Let $A\dcup B\dcup C\subseteq V(G)$ be the vertex set of such a copy with the notation 
	as above. Notice that the calculation~\eqref{eq:Cnonempty} shows $C\ne\varnothing$. 
	So if $k_*=b-1$, then $A$ and an arbitrary vertex in $C$ would form an $RK_{b}$, 
	while the remaining vertices in $B\cup C$ would form a~$BK_{q-b}$. 
	Due to the absence of green edges from $A$ to $B\dcup C$ this means that $G$ 
	would contain a $G_{q+b, b}$, which is absurd. 
	
	This consideration proves  
		\begin{equation}\label{eq:ksmall}
		k_*\le b-2
	\end{equation} 
		and our maximal choice of $k_*$ entails that $G$ does not contain an $H_{k_*+1}$.

	\smallskip

	{\it \hskip1em First Case: $k_*<2b-q-1$.} 

	\medskip

	This yields $p_*=p_{k_*+1}=0$ and $B=\varnothing$. We assign the weight~$3$ to the 
	vertices in~$A$ and the weight~$2$ to the vertices in~$C$. In view of $q>b$ the total 
	weight of all vertices in $A\dcup C$ is
		\[
		3k_*+2(q-k_*)=2q+k_*\le 2b+q-2 \le 2b+q-2+\tfrac12 (q-b-1)=\tfrac 12(3q+3b-5)\,.
	\]
		Writing $\gamma_z$ for the weight of every vertex $z\in A\dcup C$ we find, 
	by the argument leading to~\eqref{eq:xnew+}, a vertex $x\in V(G)$ satisfying 
		\[
		\sum_{z\in A\cup C} \gamma_z\wq(x, z)<\frac{6(3q+3b-5)}{3q+3b-5}=6\,.
	\]
		Owing to the integrality of the left side we obtain
		\begin{equation}\label{eq:x1}
		3\sum_{a\in A} \wq(x, a) + 2\sum_{c\in C} \wq(x, c) \le 5\,.
	\end{equation}
		Because of $BK_{q+1}\not\subseteq G$ there exists a vertex $z\in A\cup C$
	with $\wq(x, z)=2$. In view of~\eqref{eq:x1} this can only happen if $z\in C$ 
	and thus we infer
		\[
		3\sum_{a\in A} \wq(x, a) + 2\sum_{c\,\in\, C\setminus \{z\}} \wq(x, c) \le 1\,,
	\]
		which in turn tells us that all members of $A\cup C\setminus \{z\}$ are red neighbours 
	of $x$. Moreover,~${z\in C}$ and $\wq(x, z)=2$ imply $x\not\in A$. 
	Consequently, $(A\cup\{x\})\dcup (C\setminus \{z\})$ is the vertex partition 
	of an $H_{k_*+1}$ in $G$ and we have reached a contradiction.

	\smallskip

	{\it \hskip1em Second Case: $k_*\ge 2b-q-1$.} 

	\medskip

	Observe that now we have $p_*=k_*+q+1-2b$ and $p_{k_*+1}=p_*+1$. 
	This time we assign the weight $\gamma_z=2$ 
	to every $z\in A$ and the weight $\gamma_z=1$ to every $z\in B\dcup C$. As before we find a 
	vertex $x\in V(G)$ with 
		\[
		\sum_{z\in A\cup B\cup C} \gamma_z\wq(x, z)<\frac{12(k_*+q)}{3q+3b-5}
		\,\overset{\text{\eqref{eq:ksmall}}}{<}\,
		\frac{12(q+b-2)}{3(q+b-2)} = 4\,,
	\]
		i.e.,
		\[
		2\sum_{a\in A} \wq(x, a) + \sum_{y\in B\cup C} \wq(x, y) \le 3\,.
	\]
		Again the absence of a $BK_{q+1}$ in $G$ leads us to a vertex $z\in B\cup C$ with 
	$\wq(x, z)=2$. Moreover, there are only red edges from $x$ to $A$ and at most one blue 
	but no green edges from $x$ to $B\cup C\setminus\{z\}$. This implies, however, that 
	$(A\cup \{x\})\cup(B\cup C\setminus\{z\})$ supports an $H_{k_*+1}$ in~$G$, which is again a 
	contradiction.
\end{proof}

\section{The proof of Theorem~\ref{thm:main-odd}}

An iterative application of Lemma~\ref{lem:blue} leads to the following result. 

\begin{lemma}\label{lem:bkr1}
	For $r\ge 2$ every $\ccF_{2r+1}$-free coloured graph $G$ of order~$n$
	with $\delta(G)>\tfrac{6r-8}{3r-1}n$ is $BK_{r+1}$-free.
\end{lemma}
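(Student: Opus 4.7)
The plan is to argue by contradiction: assume that $G$ contains a copy of $BK_{r+1}$, and then iteratively apply Lemma~\ref{lem:blue} to enlarge this blue clique, discarding the alternative conclusion of each application because it would exhibit a member of $\ccF_{2r+1}$ inside $G$.

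The key arithmetic observation is that whenever $q+b=2r+1$ one has $3q+3b-5=6r-2$, so the threshold $\bigl(2-\tfrac{12}{3q+3b-5}\bigr)n$ demanded by Lemma~\ref{lem:blue} collapses to $\tfrac{6r-8}{3r-1}n$, exactly matching the hypothesis on $\delta(G)$. Moreover for $1\le b\le r$ the coloured graph $G_{q+b,b}=G_{2r+1,b}$ lies in $\ccF_{2r+1}$, so the alternative conclusion of Lemma~\ref{lem:blue} is excluded by assumption.

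Concretely, I would prove by induction on $q$ that $BK_q\subseteq G$ for every $q$ with $r+1\le q\le 2r$, the base case being our contradictory hypothesis. For the inductive step from $q$ to $q+1$ with $r+1\le q\le 2r-1$, set $b=2r+1-q$; then $1\le b\le r<q$ and Lemma~\ref{lem:blue} applied with these parameters returns either $BK_{q+1}\subseteq G$ (which advances the induction) or $G_{2r+1,b}\subseteq G$ (which is forbidden). Finally, $BK_{2r}$ is---as a coloured graph---nothing other than $G_{2r+1,1}\in\ccF_{2r+1}$, since the latter consists of a single vertex joined by blue edges to a $BK_{2r-1}$; so the case $q=2r$ already contradicts the $\ccF_{2r+1}$-freeness of $G$. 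There is no real obstacle: the minimum degree hypothesis is tailored so that the single threshold $\tfrac{6r-8}{3r-1}n$ suffices along the entire telescope $r+1\to r+2\to\cdots\to 2r$, and the identity $G_{2r+1,1}=BK_{2r}$ closes the argument without a separate final step.
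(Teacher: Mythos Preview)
Your proposal is correct and takes essentially the same approach as the paper: the paper phrases it as choosing the maximal $q$ with $BK_q\subseteq G$ and deriving a contradiction from a single application of Lemma~\ref{lem:blue} with $b=2r+1-q$, whereas you unwind this into an explicit induction from $q=r+1$ up to $q=2r$. The key ingredients---the identity $3q+3b-5=6r-2$ when $q+b=2r+1$, the fact that $G_{2r+1,b}\in\ccF_{2r+1}$ for $1\le b\le r$, and the observation $G_{2r+1,1}=BK_{2r}$---are identical in both arguments.
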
  

\begin{proof}
	Let $q$ be maximal with $BK_q\subseteq G$ and assume for the sake of 
	contradiction that $q\ge r+1$. Due to $G_{2r+1, 1}=BK_{2r}$ we have
	$q < 2r$ and, hence, the number $b=2r+1-q$ satisfies $q>b\ge 1$.
	Since $\delta(G)>\bigl(2-\frac{12}{3(2r+1)-5}\bigr)n$, it follows from 
	Lemma~\ref{lem:blue} that either $BK_{q+1}\subseteq G$ or $G_{2r+1, b}\subseteq G$.
	The former, however, contradicts the maximality of $q$ and the latter contradicts
	$G$ being $\ccF_{2r+1}$-free.
\end{proof}

Now Theorem~\ref{thm:main-odd} follows by means of a simple application of the 
Andr\'asfai-Erd\H{o}s-S\'os theorem.

\begin{proof}[Proof of Theorem~\ref{thm:main-odd}]
	Let $H$ denote the simple graph on $V(G)$ whose edges correspond to the blue 
	or red edges of $G$. The minimum degree condition on $G$ yields 
		\[
		\delta(H)\ge \tfrac12\delta(G)>\tfrac{3r-4}{3r-1}n
	\]
		and Lemma~\ref{lem:bkr1} tells us that~$H$ is $K_{r+1}$-free. 
	So by Theorem~\ref{thm:AES} $H$ is $r$-partite and the claim follows.
\end{proof}

\section{The proof of Theorem~\ref{thm:main-even}}

Again we begin by utilising Lemma~\ref{lem:blue}.

\begin{lemma}\label{lem:bkr2}
	For $r\ge 3$ every $\ccF_{2r}$-free coloured graph $G$ of order~$n$
	with $\delta(G)>\tfrac{14r-24}{7r-5}n$ is $BK_{r+1}$-free.
\end{lemma}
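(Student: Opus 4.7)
The plan is to imitate the proof of Lemma~\ref{lem:bkr1} essentially verbatim, invoking Lemma~\ref{lem:blue} once with a well-chosen pair $(q,b)$. First I would assume for contradiction that~$G$ contains a blue clique on~$r+1$ vertices, and then let $q$ be maximal with $BK_q\subseteq G$, so that $q\ge r+1$. Since $G_{2r,1}=BK_{2r-1}$ belongs to $\ccF_{2r}$ and $G$ is $\ccF_{2r}$-free, I get $q\le 2r-2$. I would then set $b=2r-q$, which satisfies $1\le b\le r-1<q$; in particular $G_{q+b,b}=G_{2r,b}\in\ccF_{2r}$.

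The next step is to verify that the given minimum-degree condition is strong enough to apply Lemma~\ref{lem:blue} with this choice. Since $q+b=2r$, the required threshold is
\[
\Bigl(2-\frac{12}{3q+3b-5}\Bigr)n=\Bigl(2-\frac{12}{6r-5}\Bigr)n=\frac{12r-22}{6r-5}\,n\,.
\]
A short cross-multiplication, $(14r-24)(6r-5)-(12r-22)(7r-5)=10>0$, shows that $\tfrac{14r-24}{7r-5}>\tfrac{12r-22}{6r-5}$, and hence the hypothesis $\delta(G)>\tfrac{14r-24}{7r-5}n$ guarantees $\delta(G)>\bigl(2-\tfrac{12}{3q+3b-5}\bigr)n$ as required.

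Lemma~\ref{lem:blue} then produces either a $BK_{q+1}$ in~$G$, contradicting the maximality of~$q$, or a copy of $G_{2r,b}$ in~$G$, contradicting $\ccF_{2r}$-freeness. Either way we reach a contradiction, so no $BK_{r+1}$ can sit inside~$G$.

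There is no serious obstacle here; the only thing to be careful about is the bookkeeping showing $1\le b<q$ (which uses $r+1\le q\le 2r-2$, hence $r\ge 3$ is needed) and the numerical comparison of the two thresholds. The heavy lifting was already done inside Lemma~\ref{lem:blue}.
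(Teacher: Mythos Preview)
Your proposal is correct and follows essentially the same approach as the paper: take $q$ maximal with $BK_q\subseteq G$, set $b=2r-q$, and invoke Lemma~\ref{lem:blue} to reach a contradiction. You are in fact slightly more careful than the paper, which only records $q\le 2r-1$ and merely asserts that $\delta(G)>\bigl(2-\tfrac{12}{6r-5}\bigr)n$ suffices, whereas you tighten the bound to $q\le 2r-2$ and verify the threshold comparison explicitly.
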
  

\begin{proof}
	As in the proof of Lemma~\ref{lem:bkr1} we look at the largest integer $q$
	with $BK_q\subseteq G$ and observe that $G_{2r, 1}=BK_{2r-1}$ shows $q\le 2r-1$.
	So assuming $q\ge r+1$ Lemma~\ref{lem:blue} would again tell us that either
	$BK_{q+1}$ or $G_{2r, 2r-q}$ is a subgraph of $G$, both of which is absurd.
	Actually, this argument only requires the lower bound 
	$\delta(G)>\bigl(2-\frac{12}{6r-5}\bigr)n$ on the minimum degree of $G$, 
	which is less than what we stated.
\end{proof}

In order to define the homomorphism demanded by Theorem~\ref{thm:main-even} it would be
tremendously helpful to know that no induced subgraph of $G$ with three vertices has
exactly one red edge. While not being true in general, this assertion will turn out 
to hold in the important special case that all edges of $G$ that are not themselves red 
belong to the common red neigbourhood of some $RK_{r-2}$ (see Lemma~\ref{lem:wicked} below).
This property of $G$ can in turn be derived from a certain
``edge-maximality'' condition (see Lemma~\ref{lem:blue-secure} and Lemma~\ref{lem:green-secure} 
below). The definition that follows facilitates talking about this plan.

\begin{dfn}
	Let $G=(V, w)$ be a coloured graph.
	\begin{enumerate}[label=\alabel]
	\item  If $G$ is $\ccF_{2r}$-free and every $\ccF_{2r}$-free coloured graph $G'=(V, w')$ 
	having $G$ as a subgraph (i.e., satisfying $w'(x, y)\ge w(x, y)$ for all $x, y\in V$)
	coincides with $G$, then we say that $G$ is {\it extremal}.
	\item A blue or green edge of $G$ is called {\it secure} if it is contained 
	in the common red neighbourhood of some $RK_{r-2}$.
	\item A {\it wicked triangle} in $G$ is a triple $(x, y, z)$ of distinct vertices, such that 
	$xy$ is red and $xz$, $yz$ are either blue or green. 
	If in this situation both $xz$ and $yz$ are blue, then $(x, y, z)$ is said to be 
	a {\it blue wicked triangle}. 
	\end{enumerate}
\end{dfn}

We shall see later that one only needs to deal with the extremal case when proving 
Theorem~\ref{thm:main-even}. As indicated above we will prove in this case that 
all blue and green edges are indeed secure and that wicked triangles do not exist.
We commence with the easiest of these claims, the security of blue edges.

\begin{lemma}\label{lem:blue-secure}
	If $G$ designates an extremal $\ccF_{2r}$-free coloured graph with $n$ vertices and 
	$\delta(G)>\tfrac{14r-24}{7r-5}n$, then all blue edges of $G$ are secure.
\end{lemma}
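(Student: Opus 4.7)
I would argue by contradiction, assuming that some blue edge $xy$ of $G$ is not secure, so that $N_R(x)\cap N_R(y)$ contains no $RK_{r-2}$. The goal is to reach a contradiction either with the extremality of $G$ or directly with its $\ccF_{2r}$-freeness.

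The starting point is \emph{extremality}. Let $G^{+}$ be the coloured graph obtained from $G$ by recolouring the edge $xy$ from blue to red; since $G$ is extremal, $G^{+}$ must contain a copy of some $G_{2r,i}$. Any such copy has to use $xy$ (otherwise it would already sit in $G$), and since $xy$ is red in $G^{+}$ but blue in $G$ it can only play the role of a red edge inside the $RK_i$-part of $G_{2r,i}$. Writing $X=\{x,y,v_1,\dots,v_{i-2}\}$ for the image of that $RK_i$ and $Y=\{u_1,\dots,u_{2r-2i}\}$ for the image of the accompanying $BK_{2r-2i}$-part, we read off that in $G$ every edge of $X$ other than $xy$ is red, every edge inside $Y$ is non-green, and every edge between $X$ and $Y$ is non-green. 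In particular $\{v_1,\dots,v_{i-2}\}$ is an $RK_{i-2}$ contained in $N_R(x)\cap N_R(y)$.

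Now I would fix $i$ to be \emph{maximal} among all such embeddings. Note that $2\le i\le r$; if $i=r$ then $v_1,\dots,v_{r-2}$ already witness security, a contradiction, so $i \le r-1$. My plan is then to contradict this maximality by building a copy of $G_{2r,i+1}$ inside $G^{+}$ which still uses $xy$. To find a vertex to adjoin, I apply the weight-method template from Section~2 to $X\cup Y$ with weights $\gamma_v=2$ for $v\in X$ and $\gamma_v=1$ for $v\in Y$; since $\sum\gamma_v=2r$ and $\delta(G)>\tfrac{14r-24}{7r-5}n$, the averaging in $\wq$-form produces some $z\in V(G)$ (and for $n$ large enough one may take $z\notin X\cup Y$) with
\[
  2\sum_{v\in X}\wq(v,z)+\sum_{v\in Y}\wq(v,z)<\frac{28r}{7r-5}\le \tfrac{21}{4}.
\]
As the left side is a nonnegative integer, it is at most $5$ when $r=3$ and at most $4$ when $r\ge 4$. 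In the ``good'' case, where $z$ is red to every vertex of $X$ and green to at most two vertices of $Y$, the set $X\cup\{z\}$ forms an $RK_{i+1}$ in $G^{+}$, and any $(2r-2i-2)$-subset of $Y$ avoiding those at most two greens serves as the accompanying $BK_{2r-2i-2}$-clique; together they produce the desired $G_{2r,i+1}$ in $G^{+}$.

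The main obstacle will be the remaining sub-cases, where $z$ has a non-red edge into $X$ or more than two green edges into $Y$. In each such sub-case the weight budget is almost entirely consumed by a single feature, which strongly restricts the local colour pattern. I would plan to dispose of these either by an exchange argument---swapping $z$ for one of the $v_j$ inside $X$ and re-running the weight method with a refined weighting that puts extra mass on $\{x,y\}$---or by combining the forced pattern with Lemma~\ref{lem:bkr2} (the $BK_{r+1}$-freeness already established) to exhibit a forbidden $G_{2r,j}$ inside $G$ itself. Making sure this case analysis closes, especially in the tight regime $r=3$ or when $i$ is close to $r-1$, is where most of the technical work lies.
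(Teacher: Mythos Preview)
Your opening moves are exactly right: recolour $xy$ red, use extremality to locate a copy of some $G_{2r,i}$ in $G^{+}$ that passes through $xy$, and note that $i=r$ immediately yields the $RK_{r-2}$ witnessing security. Where you go astray is in the treatment of $i\le r-1$. You propose a weight argument together with an admittedly unfinished case analysis, but all of this is unnecessary: the case $i\le r-1$ is killed in one line by Lemma~\ref{lem:bkr2}, which you only invoke as a last-resort tool for sub-cases.

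Here is the observation. The graph $G_{2r,i}$ has $2r-i$ vertices and contains no green edge whatsoever. For $i\le r-1$ this means $|X\cup Y|=2r-i\ge r+1$. In $G^{+}$ every pair inside $X\cup Y$ is blue or red; passing from $G^{+}$ back to $G$ only changes the single edge $xy$ from red to blue, so in $G$ every pair inside $X\cup Y$ is still non-green. Hence $X\cup Y$ supports a $BK_{r+1}$ in $G$, contradicting Lemma~\ref{lem:bkr2}. Thus $i=r$ is forced, and you are done. This is precisely the paper's argument, and it avoids the weight method entirely for this lemma. Your maximal-$i$ strategy, the averaging with $\gamma_v\in\{1,2\}$, the exchange arguments, and the residual case analysis can all be deleted; the ``technical work'' you anticipate never materialises. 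As a minor additional remark, your parenthetical ``for $n$ large enough one may take $z\notin X\cup Y$'' would also be problematic, since the lemma carries no largeness hypothesis on $n$.
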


\begin{proof}
Let $xy$ denote an arbitrary blue edge of $G$. By extremality, the weighted graph~$G'$
arising from $G$ by recolouring $xy$ red contains, for some $i\in [r]$, a subgraph isomorphic
to $G_{2r, i}$. If $i\ne r$ this subgraph would have at least $r+1$ vertices no two of which are 
connected by a green edge in $G$. Consequently, $G$ would contain a $BK_{r+1}$, which contradicts
Lemma~\ref{lem:bkr2}. So $G'$ contains an $RK_r$ and, as $G$ was $RK_r$-free, the vertices 
$x$ and $y$ must belong to this $RK_r$. Its remaining $r-2$ vertices form, in $G$, an $RK_{r-2}$
whose red neighbourhood contains $x$ and $y$. 
\end{proof}

At this moment we could already rule out the existence of blue wicked triangles 
(see part~\ref{it:wl1} of Lemma~\ref{lem:wicked} below). However, the argument
for doing so is very similar to the proof that, provided the green edges are secure
as well, there cannot be any wicked triangles at all. For this reason we postpone this 
step and consider the green edges first. But it will be important to remember
that we may already assume the absence of blue wicked triangles when treating the security
of green edges. 

As a further preparation towards this latter task we need to exclude a configuration
that is closely tied to the example given at the end of the introduction demonstrating the optimality of the minimum degree 
condition in Theorem~\ref{thm:main-even}.

\begin{dfn}\label{dfn:J}
	By $J$ we mean the coloured graph of order $r+1$ with vertex set 
		\[
		A\dcup\{b', b'', c', c''\}\,,
	\]
		where $|A|=r-3$, $c'c''$ is green, and $b'c'$, $b''c''$ are blue,
	while all other edges are red.
\end{dfn}

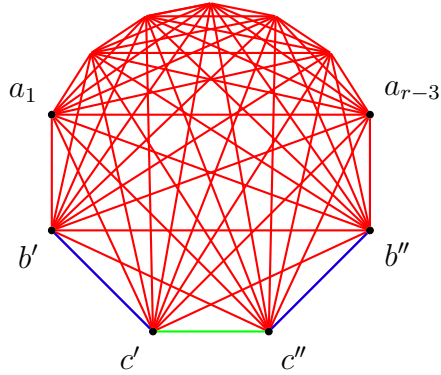
\begin{figure}[ht]
	\centering
	\begin{tikzpicture}[scale=1.5]
	\node[circle,fill=black,draw,minimum size=0.09cm, inner sep=0pt] (a) at (340:1.5) [label=340:$b''$] {};
	\node[circle,fill=black,draw,minimum size=0.09cm, inner sep=0pt] (b) at (200:1.5) [label=240:$b'$] {};
	\node[circle,fill=black,draw,minimum size=0.09cm, inner sep=0pt] (e) at (20:1.5) [label=35: $a_{r-3}$] {};
	\node[circle,fill=black,draw,minimum size=0.09cm, inner sep=0pt] (f) at (160:1.5) [label=145: $a_{1}$] {};
	\node[circle,fill=black,draw,minimum size=0.09cm, inner sep=0pt] (c) at (250:1.5) [label=250:$c'$] {};
	\node[circle,fill=black,draw,minimum size=0.09cm, inner sep=0pt] (d) at (290:1.5) [label=290:$c''$] {};
	\node[circle,fill=black,draw,minimum size=0cm, inner sep=0pt] (v1) at (45:1.5) {};
	\node[circle,fill=black,draw,minimum size=0cm, inner sep=0pt] (v2) at (90:1.5) {};
	\node[circle,fill=black,draw,minimum size=0cm, inner sep=0pt] (v3) at (135:1.5) {};
	\node[circle,fill=white,draw,minimum size=0cm, inner sep=0pt] (w1) at (67.5:1.5) {};
	\node[circle,fill=white,draw,minimum size=0cm, inner sep=0pt] (w2) at (112.5:1.5) {};
	\path[red,thick]
		(a) edge (b)
		(a) edge (c)
		(a) edge (d)
		(a) edge (e)
		(a) edge (f)
		(b) edge (c)
		(b) edge (d)
		(b) edge (e)
		(b) edge (f)
		(c) edge (e)
		(c) edge (f)
		(d) edge (e)
		(d) edge (f)
		(e) edge (f)
		(w1) edge (a)
		(w1) edge (b)
		(w1) edge (c)
		(w1) edge (d)
		(w1) edge (e)
		(w1) edge (f)
		(w2) edge (a)
		(w2) edge (b)
		(w2) edge (c)
		(w2) edge (d)
		(w2) edge (e)
		(w2) edge (f)
		(v1) edge (a)
		(v1) edge (b)
		(v1) edge (c)
		(v1) edge (d)
		(v1) edge (e)
		(v1) edge (f)
		(v2) edge (a)
		(v2) edge (b)
		(v2) edge (c)
		(v2) edge (d)
		(v2) edge (e)
		(v2) edge (f)
		(v3) edge (a)
		(v3) edge (b)
		(v3) edge (c)
		(v3) edge (d)
		(v3) edge (e)
		(v3) edge (f)
		(v1) edge (v2)
		(v1) edge (v3)
		(v1) edge (w1)
		(v1) edge (w2)
		(v2) edge (v3)
		(v2) edge (w1)
		(v2) edge (w2)
		(v3) edge (w1)
		(v3) edge (w2)
		(v2) edge (w2)
		(w1) edge (w2)
	;
	\path[blue,thick]
		(c) edge (b)
		(d) edge (a)
	;
	\path[green,thick]
		(c) edge (d)
	;
\end{tikzpicture}
        \captionof{figure}{The coloured graph $J$ with $A=\{a_1, \ldots, a_{r-3}\}$.}\label{GGr}
\end{figure}

\begin{lemma}\label{lem:exclude-J}
A $\{RK_r, BK_{r+1}\}$-free coloured graph $G$ of order $n$ with 
$\delta(G)>\tfrac{14r-24}{7r-5}n$ cannot contain $J$ as subgraph.
\end{lemma}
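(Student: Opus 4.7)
The plan is to argue by contradiction, applying the weighted-averaging template of Section~2 to a presumed copy of $J$ in $G$. Writing $V(J) = A \dcup \{b', b'', c', c''\}$ with $|A| = r - 3$, I assign the weights $\gamma_z = 7$ for $z \in A$, $\gamma_{b'} = \gamma_{b''} = 6$, and $\gamma_{c'} = \gamma_{c''} = 2$, which are exactly those suggested by the extremal example described after the statement of Theorem~\ref{thm:main-even}. The total weight is $W = 7(r - 3) + 12 + 4 = 7r - 5$, and a short direct check shows that every vertex of that extremal graph satisfies $\sum_{z \in V(J)} \gamma_z w(v, z) = 14r - 24$, i.e.\ $\sum_{z \in V(J)} \gamma_z \wq(v, z) = 14$. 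Consequently, the minimum degree hypothesis together with~\eqref{eq:xnew+} produces a vertex $x \in V(G)$ with the strict inequality $\sum_{z \in V(J)} \gamma_z \wq(x, z) < 14$, hence with $\sum_{z \in V(J)} \gamma_z \wq(x, z) \le 13$ by integrality.

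The key structural observation is that $V(J)$ contains exactly three copies of $RK_{r-1}$, namely $A \cup \{b', b''\}$, $A \cup \{b', c''\}$, and $A \cup \{b'', c'\}$; these correspond to the three red-triangles inside the four-vertex subgraph on $\{b', b'', c', c''\}$. If $x$ is red-adjacent to every vertex of any one of these $RK_{r-1}$'s, then that set together with $x$ is an $RK_r$, contradicting $\{RK_r, BK_{r+1}\}$-freeness. A routine case analysis on the distribution of blue and green edges from $x$ to $A$, $\{b', b''\}$, $\{c', c''\}$ that is permitted by the bound above shows that either one of these three $RK_{r-1}$'s is indeed red-adjacent to $x$, or one of two residual configurations arises: $x$ is either a ``twin'' of some $z \in V(J)$, in the sense that $(V(J) \setminus \{z\}) \cup \{x\}$ is again a copy of $J$, or $x$ is blue-adjacent to the two endpoints of one of the blue edges $b'c', b''c''$, creating a blue triangle together with them.

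The twin case is handled directly by working in the shifted copy of $J$, where the $RK_{r-1}$ red-adjacent to $x$ is now available and produces the required $RK_r$. For the blue-triangle case (say $\{x, b', c'\}$ is a blue triangle), I would swap $b'$ out of $V(J)$ for $x$ to obtain a fresh copy $J' \subseteq G$ of the same type, and iterate the averaging argument on $J'$. The main obstacle is to show that this iteration cannot stall: each successive tricky step should strictly enlarge the blue clique based at $c'$, so that after at most $r - 1$ iterations a $BK_{r+1}$ emerges, contradicting the $BK_{r+1}$-freeness hypothesis. This is the most delicate step and is where the strictness of the minimum degree bound is used in an essential way.
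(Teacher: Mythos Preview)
Your setup --- weights $7$ on the vertices of $A$, $6$ on $b', b''$, $2$ on $c', c''$, summing to $7r-5$, and the resulting bound $\sum_{z\in V(J)}\gamma_z\wq(x,z)\le 13$ --- is exactly what the paper does. The gap is that you have not actually carried out the case analysis you call ``routine'': had you done so, you would have found that the residual configurations (twins, blue triangles) you anticipate never occur, so the iteration you describe as ``the most delicate step'' is not needed at all and, as written, is not substantiated either.

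Here is the complete analysis, which is short. Write $T=\{q\in V(J):\wq(x,q)=2\}$. A green edge into $A$ already costs $14$, so $T\cap A=\varnothing$. If $b'\in T$, the contribution $12$ leaves at most $1$ for the rest, forcing every other vertex of $V(J)$ to be a red neighbour of $x$; then $A\cup\{b'',c',x\}$ is an $RK_r$. Symmetrically for $b''$, so we may assume $T\subseteq\{c',c''\}$. Now both $A\cup\{b',b'',c'\}$ and $A\cup\{b',b'',c''\}$ span a $BK_r$ in $G$, so $BK_{r+1}$-freeness forces $c', c''\in T$. This uses up $8$, leaving at most $5$ for the vertices in $A\cup\{b',b''\}$, each of which has weight at least $6$; hence $x$ is red to all of $A\cup\{b',b''\}$, and $A\cup\{b',b'',x\}$ is an $RK_r$. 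Every branch thus lands in one of your three $RK_{r-1}$'s being red-adjacent to $x$, and the strictness of the minimum degree hypothesis has already done all its work in passing from $<14$ to $\le 13$.
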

 
\begin{proof}
Otherwise let $Q=A\dcup\{b', b'', c', c''\}\subseteq V(G)$ be the vertex set of a copy of $J$
in $G$, the notation being as in Definition~\ref{dfn:J}.  
We assign weights to the members of $Q$ according to the formula
\[
	\gamma_q=
	\begin{cases}
		7  & \text{ if } q\in A, \cr
		6  & \text{ if } q\in \{b', b''\},  \cr
		2  & \text{ if } q\in \{c', c''\}.                      
	\end{cases}
\]
So the total weight of all vertices is $7(r-3)+6\cdot 2+2\cdot 2=7r-5$ and the standard 
argument leads to a vertex $x\in V(G)$ with
\begin{align}\label{eq:J1}
	\sum_{q\in Q}\gamma_q\wq(x, q)\le 13\,.
\end{align}

This inequality allows us to analyse the set $T=\{q\in Q\colon \wq(x, q)=2\}$. As as immediate 
consequence of~\eqref{eq:J1} we have $T\subseteq\{b', b'', c', c''\}$. Moreover, the assumption
$b'\in T$ would imply that $Q\setminus\{b'\}$ is contained in the red neighbourhood of $x$,
but then $A\cup\{b'', c', x\}$ would induce an $RK_r$ in $G$, which is absurd. By symmetry 
the same consideration applies to~$b''$ as well and thus we have $T\subseteq\{c', c''\}$.

Now it follows from $A\cup\{b', b'', c', x\}$ not spanning a $BK_{r+1}$ in $G$  
that $c'\in T$ and, similarly, we get $c''\in T$ as well. By plugging~$T=\{c', c''\}$
into~\eqref{eq:J1} we learn 
\[
	   6\sum_{q\in A\cup\{b', b''\}}\wq(x, q)\le 5\,, 
\]
and for this reason $A\cup\{b', b''\}$ is part of the red neighbourhood of $x$.
But this means that~$A\cup\{b', b'', x\}$ forms an $RK_r$ in $G$, which is absurd.
\end{proof}

Now we proceed with the security of green edges.
The argument starts in a similar way as the proof of Lemma~\ref{lem:blue-secure},
but there will be more cases to investigate.

\begin{lemma}\label{lem:green-secure}
Let $G$ be an extremal $\ccF_{2r}$-free coloured graph with $\delta(G)>\tfrac{14r-24}{7r-5}n$.
If $G$ contains no blue wicked triangle, then all green edges of $G$ are secure.
\end{lemma}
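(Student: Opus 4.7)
\emph{Plan.} Let $xy$ be an arbitrary green edge of $G$; the goal is to exhibit an $RK_{r-2}$ lying in the common red neighbourhood of $x$ and $y$. Mirroring the approach of Lemma~\ref{lem:blue-secure}, I begin by invoking the extremality of $G$ and recolouring $xy$ from green to red; the resulting coloured graph $G^{\star}$ must then contain some $G_{2r,i}$ with $i\in[r]$ as a subgraph on a vertex set $U\subseteq V(G)$. Since $G$ itself is $\ccF_{2r}$-free, the embedding has to make use of the edge $xy$, so $\{x,y\}\subseteq U$ and the pair $\{x,y\}$ corresponds to either a red edge of $G_{2r,i}$ (sitting inside $RK_i$) or a blue edge (sitting inside $BK_{2r-2i}$ or between the two parts). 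The \emph{intended} case is $i=r$: as $G_{2r,r}=RK_r$ consists only of red edges, the pair $\{x,y\}$ is automatically red in $G_{2r,r}$, and the remaining $r-2$ vertices of $U$ form an $RK_{r-2}$ red-adjacent to both $x$ and $y$ in $G$, which secures $xy$.

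Eliminating the remaining possibilities $i<r$ is the heart of the proof, and is organised by the position of $xy$ inside $G_{2r,i}$. In every such sub-case the colouring of $U$ in $G$ is completely explicit: it coincides with $G_{2r,i}$ except that $xy$ is green. For small values of $i$, one produces a blue clique of size at least $r+1$ in $U$ by pairing $BK_{2r-2i}$ with a single vertex of $RK_i$ (or with one of $x, y$ when $xy$ is a blue edge of the $BK$ part), contradicting Lemma~\ref{lem:bkr2}. For intermediate values of $i$, any two red-adjacent vertices in the $RK$ part of $U$ outside $\{x,y\}$, together with an arbitrary vertex of the nonempty $BK_{2r-2i}$, form a blue wicked triangle, contradicting the standing hypothesis. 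Jointly these two arguments dispose of all values of $i$ except the borderline ones near $i=r$, for which the $RK$ part of $U$ outside $\{x,y\}$ is too small to host a red-adjacent pair.

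The main obstacle is the borderline case $i=r-1$, where $|U|=r+1$ and the configuration on $U$ resembles---but does not literally contain---the coloured graph $J$ of Definition~\ref{dfn:J}. My plan here is to apply Lemma~\ref{lem:blue-secure} to the blue edge sitting inside the $BK_2$-part of $U$: this yields an $RK_{r-2}$ red-adjacent to both of its endpoints. The no-blue-wicked-triangle assumption then forces every edge from this $RK_{r-2}$ to $\{x,y\}$ to be either red or green, and combining these forcings with the explicit structure of $U$ either directly produces an $RK_{r-2}$ red-adjacent to both $x$ and $y$---thereby securing $xy$---or exhibits a copy of $J$ inside $G$, contradicting Lemma~\ref{lem:exclude-J}. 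I expect the bookkeeping of this last step to be the delicate point: each of the borderline sub-cases (most importantly for small $r$) requires choosing the auxiliary vertices carefully and tracking several partial colour constraints in tandem before the copy of $J$, or the desired $RK_{r-2}$, can be read off.
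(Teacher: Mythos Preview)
Your plan has the right opening move---extremality forces a copy of some $G_{2r,i}$ through $xy$ once you increase the weight of $xy$---but it diverges from the paper in two places, and the second one is a real gap.

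First, a minor point: the paper recolours $xy$ to \emph{blue}, not to red. Recolouring to red is more natural at first sight (it mirrors Lemma~\ref{lem:blue-secure}), but it is strictly less convenient: it allows $xy$ to sit inside the $RK_i$-part of the embedded $G_{2r,i}$, which creates an extra sub-case at $i=r-1$ (namely $x,y\in RK_{r-1}$) that the paper never has to face. Recolouring to blue forces the $G_{2r,i}$-weight of $xy$ to be exactly $1$, and then the $BK_{r+1}$-freeness of $G$ (applied to $U\setminus\{x\}$) immediately pins $i=r-1$ with $xy$ a blue edge of $G_{2r,r-1}$.

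Second, and more seriously: your assertion that ``the colouring of $U$ in $G$ is completely explicit'' is wrong. The embedding of $G_{2r,i}$ into $G^\star$ is only a subgraph, not an induced subgraph, so any blue edge of $G_{2r,i}$ may well be red in $G$. This undercuts several later steps. In particular, in the borderline case $i=r-1$ you propose to apply Lemma~\ref{lem:blue-secure} to ``the blue edge sitting inside the $BK_2$-part of $U$''; but that edge need not be blue in $G$ (it could be red, or---when $xy$ is itself the $BK_2$-edge---green). Even when it happens to be blue, the $RK_{r-2}$ produced by Lemma~\ref{lem:blue-secure} is red-adjacent to the two $BK_2$-vertices, not to $x$ and $y$; your claim that the no-blue-wicked-triangle hypothesis forces its edges to $\{x,y\}$ to be red or green does not hold, since the relevant triangles typically have a red third side and are not wicked at all.

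The paper handles the $i=r-1$ case differently. After reducing (via the no-blue-wicked-triangle hypothesis and $J\not\subseteq G$) to one of three explicit local pictures on $U$, only the first of these hands you the securing $RK_{r-2}$ for free. For the other two the paper does \emph{not} reuse Lemma~\ref{lem:blue-secure}; instead it runs a weighted degree-sum argument (in the style of~\eqref{eq:xnew+}) on the vertices of the picture to locate a new vertex $v$ with tightly constrained colours to $U$, and then shows that $v$ either completes an $RK_{r-2}$ red-adjacent to $x$ and $y$ or produces an $RK_r$ or a copy of $J$. This degree-counting step is the missing ingredient in your plan.
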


\begin{proof}
Recall that $G$ has to be $\{BK_{r+1}, J\}$-free by Lemma~\ref{lem:bkr2} and 
Lemma~\ref{lem:exclude-J}. 
Now consider any green edge $xy$ of $G$ and denote the coloured graph that one obtains from~$G$ 
when one recolours $xy$ to become blue by $G'$. Due to the extremality of $G$ we know that 
$G'$ cannot be $\ccF_{2r}$-free. Exploiting that $G$ is $BK_{r+1}$-free it is easily seen that
$G'$ must contain a~$G_{2r, r-1}$ with~$x$ and $y$ among its vertices. This $G_{2r, r-1}$ is,
of course, only known to be a subgraph of $G'$ that does not need to be induced. In fact,
the absence of blue wicked triangles in~$G$ entails that ``many'' edges of this subgraph that
``in general'' would only be known to be either blue or red must actually be red. 
To get an overview over the possible cases,  
we observe that due to the symmetry between $x$ and $y$ one may assume that for
the ``distinguished'' blue edge of the $G_{2r, r-1}$ one of the following three 
cases occurs.
\begin{enumerate}[label=\alabel]
\item\label{it:edge-a} It is $xy$.
\item\label{it:edge-b} It is of the form $xb$ and genuinely blue, where $b$ is in the $RK_{r-1}$. 
\item\label{it:edge-c} It is of the form $xc$ and red, where $c$ is in the $RK_{r-1}$.
\end{enumerate}

In case~\ref{it:edge-a} there may be at most one blue edge $xa_x$ from $x$ into the $RK_{r-1}$,
since otherwise $G$ would contain a blue wicked triangle. 
For the same reason, there can be at most one blue edge $ya_y$ from $y$ into the $RK_{r-1}$. 
If both blue edges exist, then $J\not\subseteq G$ implies $a_x=a_y$ and we get the 
configuration shown in Figure~\ref{fig:xy1}. Similarly, the above cases~\ref{it:edge-b} 
and~\ref{it:edge-c} lead to one of the situations in Figure~\ref{fig:edgexy}. 
Observe that it might still be the case that some of the edges drawn blue in these pictures are 
actually red in $G$. 

\begin{figure}[ht]
\centering
    \begin{subfigure}[b]{0.32\textwidth}
    \centering
	\begin{tikzpicture}
	\node[ellipse,fill=white,draw=red,thick,minimum width=1.5cm,minimum height=1cm, inner sep=0pt] (z) at (135:1.25) [] { \textcolor{red}{{\large $RK_{r-2}$}}};
	\node[circle,fill=white,draw,minimum size=0.5cm, inner sep=0pt] (a) at (45:1.25) [] {$a$};
	\node[circle,fill=white,draw,minimum size=0.5cm, inner sep=0pt] (c) at (225:1.25) [] {$x$};
	\node[circle,fill=white,draw,minimum size=0.5cm, inner sep=0pt] (d) at (315:1.25) [] {$y$};
	\path[red,thick]
		(a) edge (z)
		(c) edge (z)
		(d) edge (z)
		(a) edge (d)
			;
	\path[blue,thick]
		(a) edge (d)
		(a) edge (c)		
	;
	\path[green,thick]
		(c) edge (d)		
	;
	\end{tikzpicture}
        \caption{First case}\label{fig:xy1}
    \end{subfigure}
\hfill    
\begin{subfigure}[b]{0.32\textwidth}
    \centering
	\begin{tikzpicture}
	\node[ellipse,fill=white,draw=red,thick,minimum width=1.5cm,minimum height=1cm, inner sep=0pt] (z) at (90:1.25) [] { \textcolor{red}{{\large $RK_{r-3}$}}};
	\node[circle,fill=white,draw,minimum size=0.5cm, inner sep=0pt] (a) at (180:1.25) [] {$a$};
	\node[circle,fill=white,draw,minimum size=0.5cm, inner sep=0pt] (b) at (0:1.25) [] {$b$};
	\node[circle,fill=white,draw,minimum size=0.5cm, inner sep=0pt] (c) at (300:1.25) [] {$x$};
	\node[circle,fill=white,draw,minimum size=0.5cm, inner sep=0pt] (d) at (240:1.25) [] {$y$};
	\path[red,thick]
		(a) edge (z)
		(b) edge (z)
		(c) edge (z)
		(d) edge (z)
		(b) edge (d)
		(a) edge (d)
	;
	\path[blue,thick]
		(a) edge (b)
		(a) edge (c)
		(b) edge (c)		
	;
	\path[green,thick]
		(c) edge (d)		
	;
	\end{tikzpicture}
        \caption{Second case}\label{fig:xy2}
    \end{subfigure}
\hfill
       \begin{subfigure}[b]{0.32\textwidth}
    \centering
	\begin{tikzpicture}
	\node[ellipse,fill=white,draw=red,thick,minimum width=1.5cm,minimum height=1cm, inner sep=0pt] (z) at (120:1.5) [] { \textcolor{red}{{\large $RK_{r-4}$}}};
	\node[circle,fill=white,draw,minimum size=0.5cm, inner sep=0pt] (a) at (180:1.25) [] {$a$};
	\node[circle,fill=white,draw,minimum size=0.5cm, inner sep=0pt] (b) at (0:1.25) [] {$b$};
	\node[circle,fill=white,draw,minimum size=0.5cm, inner sep=0pt] (c) at (300:1.25) [] {$x$};
	\node[circle,fill=white,draw,minimum size=0.5cm, inner sep=0pt] (d) at (240:1.25) [] {$y$};
	\node[circle,fill=white,draw,minimum size=0.5cm, inner sep=0pt] (e) at (60:1.25) [] {$c$};
	\path[red,thick]
		(a) edge (z)
		(b) edge (z)
		(c) edge (z)
		(d) edge (z)
		(e) edge (z)
		(b) edge (d)
		(a) edge (d)
		(a) edge (c)
		(c) edge (e)
		(d) edge (e)
		(a) edge (b)
		(b) edge (e)
	;
	\path[blue,thick]
		(a) edge (e)
		(b) edge (c)
	;
	\path[green,thick]
		(c) edge (d)		
	;
	\end{tikzpicture}
        \caption{Third case}\label{fig:xy3}
    \end{subfigure}    
    \caption{Possibilities for the edge $xy$.}\label{fig:edgexy}
    \vspace{-1em}
\end{figure}
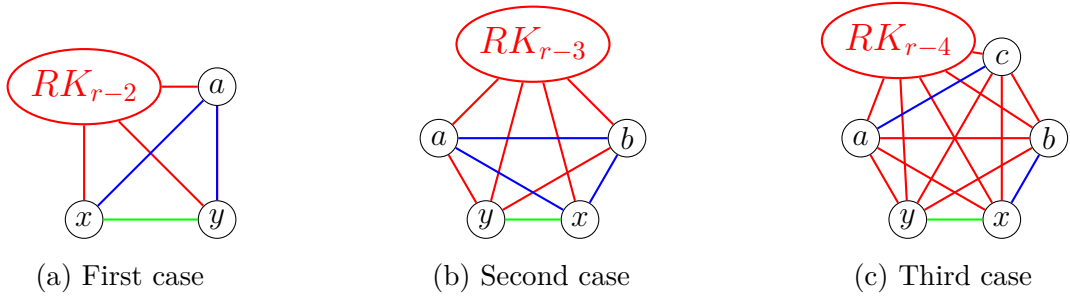

From now on we treat these three cases separately. 
If the configuration depicted in Figure~\ref{fig:xy1} 
occurs the edge $xy$ is secure due to the $RK_{r-2}$ shown there.

Suppose next that we are in the case shown in Figure~\ref{fig:xy2} and let $Q$ be the 
vertex set of the $RK_{r-3}$. Assign 
\begin{enumerate}
	\item[$\bullet$] the weight $1$ to $a$, $b$, $x$, $y$,
	\item[$\bullet$] and the weight $2$ to the members of $Q$.
\end{enumerate}
So the total weight is $2(r-1)$ and thus there is a vertex $v$ with
\[
	\sum_{z\in\{a, b, x, y\}}\wq(v, z)+2\sum_{q\in Q} \wq(v, q)\le 3\,.
\]

In combination with neither $Q\cup \{a, b, v, x\}$ nor $Q\cup \{a, b, v, y\}$
forming a $BK_{r+1}$ this implies that either $\wq(a, v)=2$ or $\wq(b, v)=2$.
By symmetry we may suppose that the latter holds, thus getting
\[
	\sum_{z\in\{a, x, y\}}\wq(v, z) +2\sum_{q\in Q} \wq(v, q)\le 1\,.
\]

It follows that all vertices in $Q$ and at least two of $a$, $x$, and $y$ are red 
neighbours of $v$. Moreover, $v\not\in\{a, x, y\}$ and none of the edges $va$, $vx$, 
and $vy$ is green. Now if $va$ and $vy$ are red, then $Q\cup\{a, v, y\}$ forms
an $RK_{r}$ in $G$, which is absurd. Furthermore, if $va$ and $vx$ are red,  
then $Q\cup\{a, v, x, y\}$ forms a copy of $J$ in $G$, which is not possible either.
So the only remaining case is that $vx$ and $vy$ are red and then $Q\cup\{v\}$ 
forms an~$RK_{r-2}$ exemplifying the security of $xy$. 

It remains to discuss the configuration shown in Figure~\ref{fig:xy3}, 
which can only arise if~$r\ge 4$. 
This time we let $Q$ denote the vertex set of the $RK_{r-4}$. Assigning 
\begin{enumerate}
\item[$\bullet$] the weight $4$ to $x$, $y$,
\item[$\bullet$] the weight $5$ to $a$, $b$, $c$,
\item[$\bullet$]  and the weight $7$ to the members of $Q$
\end{enumerate}
we have distributed a total weight of $7r-5$ and in the usual manner we find a vertex $v$
with
\[
	  4\sum_{z\in\{x, y\}}\wq(v, z)+5\sum_{z\in\{a, b, c\}}\wq(v, z)+
	  7\sum_{q\in Q} \wq(v, q)\le 13\,.
\]

Exploiting that neither $Q\cup\{a, b, c, x\}$ nor $Q\cup\{a, b, c, y\}$ induces a
$BK_{r+1}$ we infer that $\wq(v, \ell)=2$ holds for some $\ell\in\{a, b, c\}$.
Together with the above inequality this shows that all vertices in $Q\cup\{a, b, c, x, y\}$
except for $\ell$ are red neighbours of $v$. Due to the symmetry between $a$ and $c$ 
we may suppose that $\ell\ne a$. Now $Q\cup\{a, v\}$ is the desired~$RK_{r-2}$ with
$xy$ in its neighbourhood.
\end{proof}
 
Finally, we deal with the alleged absence of wicked triangles.
 
\begin{lemma}\label{lem:wicked}
Let $G$ denote a $\{RK_r, BK_{r+1}\}$-free coloured graph of order $n$ such that
$\delta(G)>\tfrac{14r-24}{7r-5}n$.\begin{enumerate}[label=\rmlabel]
\item\label{it:wl1} If all blue edges of $G$ are secure, then every wicked triangle of $G$ 
possesses a green edge.
\item\label{it:wl2} If moreover the green edges of $G$ are secure as well, then $G$ contains no 
wicked triangles.
\end{enumerate}
\end{lemma}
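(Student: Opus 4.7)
The plan is to prove both parts by contradiction using weighted degree arguments in the spirit of Lemma~\ref{lem:blue}, anchored on the security witnesses. For part~\ref{it:wl1}, I suppose $(x, y, z)$ is a blue wicked triangle — so $xy$ is red while $xz$ and $yz$ are blue — and use security of $yz$ to obtain an $RK_{r-2}$ whose vertex set~$Q$ lies in the common red neighbourhood of $\{y, z\}$. Since every vertex of $Q$ is red-adjacent to $z$ while $xz$ is blue, we have $x\notin Q$, so $Q\cup\{x, y, z\}$ has exactly $r+1$ vertices. I assign weights $\gamma_q = 7$ for $q\in Q$ and $\gamma_x = \gamma_y = \gamma_z = 3$, giving a total of $7r-5$; combining with the identity $2-\delta(G)/n<\tfrac{14}{7r-5}$ coming from the minimum degree hypothesis, the argument leading to~\eqref{eq:xnew+} produces a vertex $v\in V(G)$ with
\[
	7\sum_{q\in Q}\wq(v, q)+3\bigl(\wq(v, x)+\wq(v, y)+\wq(v, z)\bigr)\le 13\,.
\]
In particular $\sum_{q\in Q}\wq(v, q)\le 1$. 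The simple sub-case is when $v$ is red to every vertex of $Q$: then if either $vy$ or $vz$ were red, the set $Q\cup\{v, y\}$ or $Q\cup\{v, z\}$ would form an $RK_r$, contradicting our assumption; hence $vy$ and $vz$ are both non-red. A case analysis on the colours of the edges from $v$ to $\{x, y, z\}$, combined with iterated use of security applied to the new blue edges~$vy$, $vz$ (and $vx$, if blue) to generate further $RK_{r-2}$ witnesses, will force a forbidden $RK_r$ or $BK_{r+1}$ inside the union of these witnesses together with the original triangle.

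For part~\ref{it:wl2}, part~\ref{it:wl1} has already eliminated blue wicked triangles, so I may assume the wicked triangle $(x, y, z)$ possesses a green edge, which by relabelling I take to be $yz$. Now that green edges are also secure, the same $RK_{r-2}$ witness~$Q$ is still available (coming from green security of $yz$), and the identical weighted argument applies. The only new feature is that $\wq(v, z)$ may equal $2$ when $vz$ is green; this slightly enlarges the case analysis but does not change the core mechanism — forcing an $RK_r$ inside $Q\cup\{v, y\}$ or $Q\cup\{v, z\}$ as soon as $v$ is red to $Q$ and to one of $y, z$. Security of both non-red colours is what guarantees that any new non-red edge appearing in the analysis can itself be promoted to an $RK_{r-2}$ witness, so the recursion is legitimate.

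The main obstacle is the residual case in which $v$ is red to $Q$ but blue or green to each of $x, y, z$, since no forbidden subgraph appears immediately. One must then invoke security of the new non-red edges $vx$, $vy$, $vz$ to extract further $RK_{r-2}$ witnesses and, by intersecting or comparing these with~$Q$, locate either a red $r$-clique (via a common vertex red to $v$, $x$ and some $q\in Q$) or a blue $(r+1)$-clique (by chaining blue structure around the triangle $\{v, y, z\}$). Ensuring this nested case analysis closes uniformly in $r$ — in particular, carefully handling the situation where two or more of the edges $vx$, $vy$, $vz$ are simultaneously blue, which creates additional blue wicked triangles that must be combined with the available $RK_{r-2}$ witnesses — is the delicate part of the proof.
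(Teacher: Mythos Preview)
Your opening computation is fine: the weights $7$ on $Q$ and $3$ on $x,y,z$ sum to $7r-5$, and the bound $\sum\gamma_q\wq(v,q)\le 13$ follows. But the proposal has a real gap at exactly the point you flag as ``delicate'': there is no termination mechanism for the iterated use of security. In the residual case ($v$ red to all of $Q$, non-red to $y$ and $z$) you propose to invoke security on the new non-red edges $vy,vz$ and extract further witnesses. That produces a new configuration of the same shape, with new residual sub-cases; nothing in your argument supplies a well-founded quantity that strictly decreases, so the recursion has no reason to close. Concretely, once one checks (via $BK_{r+1}$-freeness on $Q\cup\{v,y,z\}$) that, say, $vz$ is green and $vy$ blue, no forbidden subgraph is visible on $Q\cup\{v,x,y,z\}$, and invoking security of $vy$ just hands you a second witness $Q'$ whose relationship to $Q$ is uncontrolled. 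A second structural weakness is that you use only one security witness (for $yz$), leaving the colours from $x$ to $Q$ entirely unknown; this blocks most attempts to build an $RK_r$ involving~$x$.

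The paper's proof supplies precisely the two extremal choices your plan lacks. First, it selects the counterexample triangle $(x,y,z)$ with $\alpha+\beta=w(x,z)+w(y,z)$ \emph{maximal}; this forces any new wicked triangle arising in the analysis to have its non-red edges no ``heavier'' than the original, sharply restricting the case split. Second --- and this is the decisive idea --- it uses security of \emph{both} $xz$ and $yz$ to obtain two red $(r-2)$-cliques $A,B$, chosen so that $k=|A\cap B|$ is maximal. The weights are then tuned to $\alpha,\beta,k$ (not uniform), and the eventual contradiction is not the direct appearance of an $RK_r$ or $BK_{r+1}$ but the construction of a replacement pair of witnesses with intersection of size $k+1$. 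Even so, one pass does not suffice: a second weighting round (with different weights $\eta_q$) is needed to finish. Your single-witness, uniformly-weighted scheme cannot access this maximality-of-$k$ lever, and without it the nested case analysis does not terminate.
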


\begin{proof} Let $V$ and $w$ be the vertex set and weight function of $G$. 
Arguing indirectly we let $(x, y, z)$ be a wicked triangle contradicting either 
of these two statements and such that subject to this $w(x, z)+w(y, z)$ is as large 
as possible. Set $\alpha=w(x, z)$ and $\beta=w(y, z)$. Notice that $\alpha, \beta\in \{0, 1\}$ 
and $xz$, $yz$ are secure. 
Consequently, there are two $(r-2)$-sets $A, B\subseteq V$ inducing red cliques such 
that~$x$,~$z$ belong to the common red neighbourhood of~$A$ while $y$, $z$ 
belong to the common red 
neighbourhood of $B$. Let us select these sets $A$ and $B$ in such a way that $k=|A\cap B|$
is maximal. Since $(A\cap B)\cup\{x, y\}$ is a red clique and $G$ is $RK_r$-free, we have 
\begin{align}\label{eq:k-small}
	k\le r-3\,.
\end{align}

\begin{figure}[ht]
    \centering      
\begin{tikzpicture}[scale = 1]
	\node[circle,fill=white,draw,minimum size=0.5cm, inner sep=0pt] (x1) at (-144:3) {$x$};
	\node[circle,fill=white,draw,minimum size=0.5cm, inner sep=0pt] (y1) at (-36:3) {$y$};
	\node[circle,fill=white,draw,minimum size=0.5cm, inner sep=0pt] (z) at (90:3) {$z$};
		
	\node[ellipse,fill=white,draw=red,thick,minimum width=1cm,minimum height=0.67cm, inner sep=0pt] (z1) at (0,0) { \textcolor{red}{{\large $RK_{k}$}}};
	\node[ellipse,fill=white,draw=red,thick,minimum width=1cm,minimum height=0.67cm, inner sep=0pt](x2) at (162:3) { \textcolor{red}{{\large $RK_{r-k-2}$}}};
	\node[ellipse,fill=white,draw=red,thick,minimum width=1cm,minimum height=0.67cm, inner sep=0pt] (y2) at (18:3) { \textcolor{red}{{\large $RK_{r-k-2}$}}};
	
	\node at (-5.3,0.9) {\textcolor{red}{\large $A\sm B$}};
	\node at (5.4,0.9) {\textcolor{red}{\large $B\sm A$}};
	\node at (-0.05,-0.7) {\textcolor{red}{\large $A\cap B$}};
	
	\path[black,thick]
		(x1) edge (z)
		(y1) edge (z)
		;
	\path[red,thick]
		(x1) edge (y1)
		(x1) edge (x2)
		(y1) edge (y2)
		(z) edge (x2)
		(z) edge (y2)
		(z1) edge (x1)
		(z1) edge (x2)
		(z1) edge (y1)
		(z1) edge (y2)
		(z1) edge (z);	
\end{tikzpicture}
        \caption{The sets $A$ and $B$. The black pairs are either blue or green.}
\end{figure}
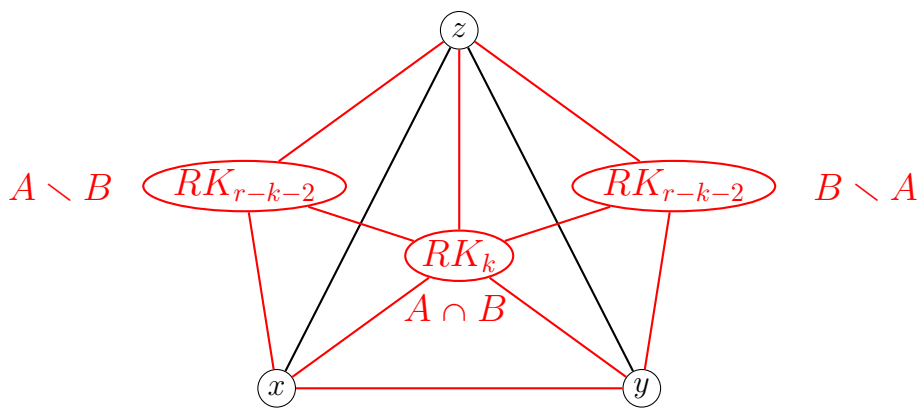

Notice that $x, y, z\not\in A\cup B$. Set $Q=(A\cup B)\cup\{x, y, z\}$,
and assign weights to the vertices in $Q$ by defining  
\[
	\gamma_q=
	\begin{cases}
		3+\alpha-\beta  & \text{ if } q\in A\setminus B \text{ or } q=x, \cr
		3+\beta-\alpha  & \text{ if } q\in B\setminus A \text{ or } q=y,  \cr
		7               & \text{ if } q\in A\cap B,                       \cr
		r-k+1           & \text{ if } q=z
	\end{cases}
\]
for $q\in Q$. So the total weight of the vertices in $Q$ is 
\[
	6(r-k-1)+7k+(r-k+1)=7r-5
\]
and by our standard argument there exists a vertex $v\in V$ with 
\begin{align}\label{eq:13-1}
	\sum_{q\in Q}\gamma_q\wq(v, q)\le 13\,.
\end{align}
It follows that
\begin{align}\label{eq:28}
	v\not\in (A\cap B) \text{ and there is no green edge from $v$ to } A\cap B\,.
\end{align}
Put 
\[
a=\sum_{q\in A\cup\{x\}} \wq(v, q)
\qand
b=\sum_{q\in B\cup\{y\}} \wq(v, q)
\]
and notice that~\eqref{eq:13-1} yields 
\begin{align}\label{eq:13-2}
	(3+\alpha-\beta)a+(3+\beta-\alpha)b+4\wq(v, z)\le 13\,,
\end{align}
because~\eqref{eq:k-small} implies $\gamma_z\ge 4$.

Since $A\cup\{x, v\}$ is not an $RK_r$, we have $a\ge 1$ and, similarly, $b\ge 1$. 
So~\eqref{eq:13-2} yields that
\begin{align}\label{eq:znotgreen}
	vz \text{ is either blue or red.}
\end{align}

If $\alpha=1$, i.e., if $xz$ is blue, then the fact that $A\cup\{v, x, z\}$ is not a $BK_{r+1}$
entails $a\ge 2$. Performing the same argument for $b$ we infer
\begin{align}\label{eq:a-big}
	a\ge 1+\alpha
	\qand
	b\ge 1+\beta\,.
\end{align}
Since $\alpha^2=\alpha$ and $\beta^2=\beta$, we have
\[
	13< 14+(1-\alpha)(1-\beta)+\alpha\beta
	= (3+\alpha-\beta)(4-\alpha-\beta+\alpha\beta)+(3+\beta-\alpha)(1+\beta)
\]
and by~\eqref{eq:13-2} and~\eqref{eq:a-big} this leads to 
\begin{equation}\label{eq:a-small}
	a\le 3-\alpha-\beta+\alpha\beta\,.
\end{equation}

Now assume there would exist two distinct vertices in $A\cup\{x\}$, say $s$ and $t$, that
fail to be red neighbours of $v$. Then $v\not\in A\cup\{x\}$ and, in particular, 
$v\not\in\{s, t\}$. So the maximality 
of $\alpha+\beta$ gives $w(v, s)+w(v, t)\le \alpha+\beta$, whence
\[
	a\ge \wq(v, s)+\wq(w, t)\ge 4-\alpha-\beta\,.
\]
In view of~\eqref{eq:a-small} this is only possible if $\alpha=\beta=1$ and 
the foregoing estimate on $a$ holds with equality. But then $A\cup\{v, x, z\}$
is a $BK_{r+1}$ in $G$, which is a contradiction. Therefore all but at most one vertex 
in $A\cup\{x\}$ are red neighbours of $v$. 

On the other hand, $A\cup\{v, x\}$ is not an $RK_r$ in $G$, so altogether we can conclude 
that there is a unique $a^*\in A\cup\{x\}$ such that $va^*$ is not red. Similarly, there 
is a unique $b^*\in B$ such that $vb^*$ is not red.

Next we suppose that $vz$ would be blue. Then, in particular, $v\not\in (A\cup B)$
and the combination of~\eqref{eq:13-2} and \eqref{eq:a-big} yields
\[
	13\ge 4+(1+\alpha)(3+\alpha-\beta)+(1+\beta)(3+\beta-\alpha)
	=10+3(\alpha+\beta)+(\alpha-\beta)^2\,,
\]
i.e., $\alpha=\beta=0$. So there is no wicked triangle with a blue edge and consequently 
there are only red edges from $v$ to $A\cup B$. Thus $a^*=x$ and $b^*=y$, for which reason 
$(x, y, v)$ is a wicked triangle. By the maximality of $\alpha+\beta$
it follows that $vx$ and $vy$ are green, i.e., that $a, b\ge 2$. But now we get a contradiction 
to~\eqref{eq:13-2}, which together with~\eqref{eq:znotgreen} proves that 
\begin{align}
	vz \text{ is red.}
\end{align}

Since $A\cup\{v, z\}$ cannot be an $RK_r$, it follows that $a^*\in A$ and, similarly, we have
$b^*\in B$. Owing to the uniqueness of $a^*$ and $b^*$ there are only two possibilities, namely
$a^*\in A\setminus B$ and $b^*\in B\setminus A$, or $a^*=b^*\in A\cap B$. If the former 
alternative would hold, then the sets $A\cup\{v\}\setminus\{a^*\}$ and 
$B\cup\{v\}\setminus\{b^*\}$ would contradict the maximality of $k$. So the only remaining 
case is that there is a member $u=a^*=b^*$ of $A\cap B$ such that $Q\setminus \{u\}$ is in the 
red neighbourhood of $v$. By~\eqref{eq:28} the edge $uv$ is blue. 
Since neither $A\cup \{v, x, z\}$
nor $B\cup\{v, y, z\}$ forms a $BK_{r+1}$, the edges $xz$ and $yz$ are green, i.e., $\alpha=\beta=0$. Let us recall that this means that there is no wicked triangle with a blue edge.

\begin{figure}[ht]
    \centering      
\begin{tikzpicture}[scale = 1]
	\node[circle,fill=white,draw,minimum size=0.5cm, inner sep=0pt] (x1) at (-144:3) {$x$};
	\node[circle,fill=white,draw,minimum size=0.5cm, inner sep=0pt] (y1) at (-36:3) {$y$};
	\node[circle,fill=white,draw,minimum size=0.5cm, inner sep=0pt] (z) at (90:3) {$z$};
		\node[circle,fill=white,draw,minimum size=0.5cm, inner sep=0pt] (u0) at (-1,-0.8) {$u$};
	\node[circle,fill=white,draw,minimum size=0.5cm, inner sep=0pt] (u1) at (1,-0.8) {$v$};

	\path[red,thick]
		(u1) edge (z)
		(u0) edge (z);
	\path[green,thick]
		(x1) edge (z)
		(y1) edge (z)
	;
	\node[ellipse,fill=white,draw=red,thick,minimum width=1cm,minimum height=0.67cm, inner sep=0pt] (z1) at (0,1) { \textcolor{red}{{\large $RK_{k-1}$}}};
	\node[ellipse,fill=white,draw=red,thick,minimum width=1cm,minimum height=0.67cm, inner sep=0pt](x2) at (162:3) { \textcolor{red}{{\large $RK_{r-k-2}$}}};
	\node[ellipse,fill=white,draw=red,thick,minimum width=1cm,minimum height=0.67cm, inner sep=0pt] (y2) at (18:3) { \textcolor{red}{{\large $RK_{r-k-2}$}}};
	\path[red,thick]
		(x1) edge (y1)
		(x1) edge (x2)
		(y1) edge (y2)
		(z) edge (x2)
		(z) edge (y2)
		(z1) edge (x1)
		(z1) edge (x2)
		(z1) edge (y1)
		(z1) edge (y2)
		(z1) edge (z)
		(u0) edge (x1)
		(u0) edge (x2)
		(u0) edge (z1)
		(u0) edge (y1)
		(u0) edge (y2)
		(u1) edge (x1)
		(u1) edge (x2)
		(u1) edge (z1)
		
		(u1) edge (y1)
		(u1) edge (y2)
	;
	\path[blue,thick]
		(u0) edge (u1)
	;
	
\end{tikzpicture}
        \caption{Current situation}
\end{figure} 

At this moment the weights $\gamma_q$ have done for us whatever they could do, and we proceed
by assigning new weights to the vertices in $Q$ and to $v$. To this end, we define
\[
	\eta_q=
	\begin{cases}
		1  & \text{ if } q\in (A \bigtriangleup B)\cup\{x, y, z\}, \cr
		2  & \text{ if } q\in\{u, v\}, \cr
		3  & \text{ if } q\in (A\cap B)\setminus\{u\}                       
	\end{cases}
\]
for $q\in Q\cup\{v\}$. By~\eqref{eq:k-small} the total weight $2r+k$ is at most $3(r-1)$
and thus there is a vertex $t$ with 
\begin{align}\label{eq:5-1}
	\sum_{q\in Q\cup\{v\}}\eta_q\wq(t, q)\le 5\,.
\end{align}

We will now analyse the set $T=\bigl\{q\in Q\cup\{v\}\colon \wq(q, t)=2\bigr\}$.
By~\eqref{eq:5-1} it needs to be disjoint to $A\cap B\sm\{u\}$. Suppose now that 
$v\in T$. Since the triangle $(u, t, v)$ cannot be wicked, it is not the case 
that $ut$ is a red edge, which in turn yields $\wq(u, t)+\wq(v, t)\ge 3$, 
contrary to~\eqref{eq:5-1}. This proves that $v\not\in T$ and by symmetry $u\not\in T$
holds as well. 

Now it follows from $G$ being $BK_{r+1}$-free that each of the four sets 
$(A\setminus B)\cup\{x\}$, $(A\setminus B)\cup\{z\}$, $(B\setminus A)\cup\{y\}$, and 
$(B\setminus A)\cup\{z\}$ contains a member of $T$. On the other hand~\eqref{eq:5-1} 
yields $|T|\le 2$. For these reasons, we have $T=\{a^*, b^*\}$
for two vertices~$a^*\in A\setminus B$ and $b^*\in B\setminus A$. 

Next we contend that $S=(Q\cup\{v\})\setminus T$ contains only red neighbours of $t$.
To see this, consider an arbitrary $s\in S$. In view of $s\not\in T$ the edge $st$ is either 
red or blue. Moreover, at least one of $a^*$ or $b^*$ is a red neighbour of $s$,
so suppose that $sa^*$ is red. Since $(s, a^*, t)$ cannot be a wicked triangle 
with a blue edge, it follows that $st$ is indeed red. 
  
Now the sets $A\cup\{t\}\setminus\{a^*\}$ and $B\cup\{t\}\setminus\{b^*\}$ contradict the maximality of $k$.
\end{proof}

We conclude this section by giving the proof of our second main result.

\begin{proof}[Proof of Theorem~\ref{thm:main-even}] Let $G'=(V, w')$ be an $\ccF_{2r}$-free
coloured graph with the property that ${w'(x, y)\ge w(x, y)}$ holds for all $x, y\in V$ 
and such that subject to this condition $e(G')$ is maximal. 
Then $G'$ is extremal and satisfies $\delta(G')\ge \delta(G)>\tfrac{14r-24}{7r-5} n$. 
Since every homomorphism from $G'$ to $RK_r^-$ is also a homomorphism from $G$ to $RK_r^-$, 
we may suppose for notational simplicity that $G'=G$, i.e., that $G$ itself is extremal.

Now by Lemma~\ref{lem:blue-secure} the blue edges of~$G$ are secure and 
Lemma~\ref{lem:wicked}\ref{it:wl1} informs us that~$G$ contains no blue wicked triangle.
This in turn implies in view of Lemma~\ref{lem:green-secure} that the green edges of $G$ 
are secure as well and, hence, Lemma~\ref{lem:wicked}\ref{it:wl2} is applicable, showing
that~$G$ contains no wicked triangles at all. This fact can be reformulated by saying that
the reflexive and symmetric relation ``$w(x, y)\in \{0, 1\}$'' is also 
transitive, i.e., an equivalence relation. Denote its (nonempty) equivalence classes 
by $A_1, \ldots, A_m$. We will suppose moreover that this indexing has been arranged in such 
a way that for some integer $s\in [0, m]$ each of the sets $A_1, \ldots, A_s$ spans at least 
one blue edge in $G$, whilst each of $A_{s+1}, \ldots, A_m$ forms a green clique. 

For every $i\in [m]$ we denote the minimum degree of the blue graph $G$ induces on $A_i$ 
by~$\alpha_i$. Notice that 
\[
	\tfrac{14r-24}{7r-5}n<\delta(G)\le 2(n-|A_i|)+\alpha_i
\]
holds for every $i\in [m]$, whence
\begin{equation}\label{eq:degAi}
	2|A_i|-\alpha_i< \tfrac{14}{7r-5}n\,.
\end{equation}
For $i\in [s+1, m]$ we have $\alpha_i=0$ and the previous inequality simplifies to 
$|A_i|<\tfrac{7}{7r-5}n$. If, however, $i\in [s]$, then the trivial bound
$\alpha_i<|A_i|$ leads to $|A_i|<\tfrac{14}{7r-5}n$. By adding these estimates
up we obtain
\[
	n=\sum_{i=1}^m |A_i|<\frac{14s+7(m-s)}{7r-5} n<\frac{m+s}{r-1} n\,,
\]
wherefore $m+s\ge r$. On the other hand, by taking arbitrary blue edges from each of 
$A_1, \ldots, A_s$ as well as arbitrary vertices from each of $A_{s+1}, \ldots, A_m$
we can construct a~$BK_{m+s}$ in $G$. So in view of Lemma~\ref{lem:bkr2} we must have 
$m+s=r$. Similar arguments shows that the blue graphs induced by $G$ 
on $A_1, \ldots, A_s$ are triangle-free. Moreover, one has $s\ge 1$, 
for otherwise $G$ would contain an $RK_r$.

Now for each $i\in [s]$ we find
\begin{align*}
	n&=|A_i|+\sum_{j\ne i}|A_j|<|A_i|+\frac{14(s-1)+7(m-s)}{7r-5} n \\
	 &=|A_i|+\frac{7(m+s-2)}{7r-5} n
	  =|A_i|+\frac{7r-14}{7r-5} n
\end{align*}
and, consequently, $|A_i|>\tfrac{9}{7r-5}n$. In combination with~\eqref{eq:degAi}
this leads to $2|A_i|-\alpha_i< \tfrac{14}{9}|A_i|$, i.e., $\alpha_i>\tfrac{4}{9}|A_i|$.
Since the blue graph $G$ induces on $A_i$ is triangle-free and $\tfrac{4}{9}>\tfrac{2}{5}$,
the case $r=2$ of Theorem~\ref{thm:AES} entails that 
this blue graph is bipartite. 

Thus for each $i\in [s]$ there is a partition $A_i=B_i\dcup C_i$ such that 
$B_i$ and $C_i$ are green cliques in~$G$. The structure we have thereby found in~$G$
may be regarded as a homomorphism from~$G$ to a coloured graph of order $m+s=r$
having a blue matching of size $s$ and otherwise red edges only. Due to $s\ge 1$
this proves Theorem~\ref{thm:main-even}. 		
\end{proof}

\begin{bibdiv}
\begin{biblist}

\bib{AES74}{article}{
   author={Andr{\'a}sfai, B.},
   author={Erd{\H{o}}s, P.},
   author={S{\'o}s, V. T.},
   title={On the connection between chromatic number, maximal clique and
   minimal degree of a graph},
   journal={Discrete Math.},
   volume={8},
   date={1974},
   pages={205--218},
   issn={0012-365X},
   review={\MR{0340075}},
}

\bib{BR}{article}{
	author={Bellmann, Louis},
	author={Reiher, Chr.}, 
	title={Tur\'an's Theorem for the Fano Plane}, 
	eprint={1804.07673},
	note={Combinatorica. To Appear},
}

\bib{BE76}{article}{
   author={Bollob{\'a}s, B{\'e}la},
   author={Erd{\H{o}}s, Paul},
   title={On a Ramsey-Tur\'an type problem},
   journal={J. Combinatorial Theory Ser. B},
   volume={21},
   date={1976},
   number={2},
   pages={166--168},
   review={\MR{0424613}},
}

\bib{Brandt}{article}{
   author={Brandt, Stephan},
   title={On the structure of graphs with bounded clique number},
   journal={Combinatorica},
   volume={23},
   date={2003},
   number={4},
   pages={693--696},
   issn={0209-9683},
   review={\MR{2047472}},
   doi={10.1007/s00493-003-0042-z},
}

\bib{DeFu00}{article}{
   author={De Caen, Dominique},
   author={F\"uredi, Zolt\'an},
   title={The maximum size of 3-uniform hypergraphs not containing a Fano
   plane},
   journal={J. Combin. Theory Ser. B},
   volume={78},
   date={2000},
   number={2},
   pages={274--276},
   issn={0095-8956},
   review={\MR{1750899}},
   doi={10.1006/jctb.1999.1938},
}

\bib{EHSS}{article}{
   author={Erd{\H{o}}s, P.},
   author={Hajnal, A.},
   author={S{\'o}s, Vera T.},
   author={Szemer{\'e}di, E.},
   title={More results on Ramsey-Tur\'an type problems},
   journal={Combinatorica},
   volume={3},
   date={1983},
   number={1},
   pages={69--81},
   issn={0209-9683},
   review={\MR{716422}},
   doi={10.1007/BF02579342},
}

\bib{ES69}{article}{
   author={Erd{\H{o}}s, P.},
   author={S{\'o}s, Vera T.},
   title={Some remarks on Ramsey's and Tur\'an's theorem},
   conference={
      title={Combinatorial theory and its applications, II},
      address={Proc. Colloq., Balatonf\"ured},
      date={1969},
   },
   book={
      publisher={North-Holland, Amsterdam},
   },
   date={1970},
   pages={395--404},
   review={\MR{0299512}},
}

\bib{FLZ15}{article}{
   author={Fox, Jacob},
   author={Loh, Po-Shen},
   author={Zhao, Yufei},
   title={The critical window for the classical Ramsey-Tur\'an problem},
   journal={Combinatorica},
   volume={35},
   date={2015},
   number={4},
   pages={435--476},
   issn={0209-9683},
   review={\MR{3386053}},
   doi={10.1007/s00493-014-3025-3},
}

\bib{FK02}{article}{
   author={F\"uredi, Zolt\'an},
   author={K\"undgen, Andr\'e},
   title={Tur\'an problems for integer-weighted graphs},
   journal={J. Graph Theory},
   volume={40},
   date={2002},
   number={4},
   pages={195--225},
   issn={0364-9024},
   review={\MR{1913847}},
   doi={10.1002/jgt.10012},
}

\bib{FuSi05}{article}{
   author={F\"uredi, Zolt\'an},
   author={Simonovits, Mikl\'os},
   title={Triple systems not containing a Fano configuration},
   journal={Combin. Probab. Comput.},
   volume={14},
   date={2005},
   number={4},
   pages={467--484},
   issn={0963-5483},
   review={\MR{2160414}},
   doi={10.1017/S0963548305006784},
}

\bib{KeMu12}{article}{
   author={Keevash, Peter},
   author={Mubayi, Dhruv},
   title={The Tur\'an number of $F_{3,3}$},
   journal={Combin. Probab. Comput.},
   volume={21},
   date={2012},
   number={3},
   pages={451--456},
   issn={0963-5483},
   review={\MR{2912791}},
   doi={10.1017/S0963548311000678},
}

\bib{KeSu05}{article}{
   author={Keevash, Peter},
   author={Sudakov, Benny},
   title={The Tur\'an number of the Fano plane},
   journal={Combinatorica},
   volume={25},
   date={2005},
   number={5},
   pages={561--574},
   issn={0209-9683},
   review={\MR{2176425}},
   doi={10.1007/s00493-005-0034-2},
}

\bib{LR-a}{article}{
	author={L\"uders, Clara Marie},
	author={Reiher, Chr.}, 
	title={The Ramsey-Tur\'an problem for cliques}, 
	eprint={1709.03352},
	note={Israel Journal of Mathematics. To Appear},
}

\bib{MuRo02}{article}{
   author={Mubayi, Dhruv},
   author={R\"odl, Vojt\^ech},
   title={On the Tur\'an number of triple systems},
   journal={J. Combin. Theory Ser. A},
   volume={100},
   date={2002},
   number={1},
   pages={136--152},
   issn={0097-3165},
   review={\MR{1932073}},
   doi={10.1006/jcta.2002.3284},
}

\bib{RoSi95}{article}{
   author={R\"odl, Vojt\v ech},
   author={Sidorenko, Alexander},
   title={On the jumping constant conjecture for multigraphs},
   journal={J. Combin. Theory Ser. A},
   volume={69},
   date={1995},
   number={2},
   pages={347--357},
   issn={0097-3165},
   review={\MR{1313901}},
   doi={10.1016/0097-3165(95)90057-8},
}

\bib{Si68}{article}{
   author={Simonovits, M.},
   title={A method for solving extremal problems in graph theory, stability
   problems},
   conference={
      title={Theory of Graphs},
      address={Proc. Colloq., Tihany},
      date={1966},
   },
   book={
      publisher={Academic Press, New York},
   },
   date={1968},
   pages={279--319},
   review={\MR{0233735}},
}

\bib{SS01}{article}{
   author={Simonovits, Mikl{\'o}s},
   author={S{\'o}s, Vera T.},
   title={Ramsey-Tur\'an theory},
   note={Combinatorics, graph theory, algorithms and applications},
   journal={Discrete Math.},
   volume={229},
   date={2001},
   number={1-3},
   pages={293--340},
   issn={0012-365X},
   review={\MR{1815611}},
   doi={10.1016/S0012-365X(00)00214-4},
}

\bib{Sz72}{article}{
   author={Szemer{\'e}di, Endre},
   title={On graphs containing no complete subgraph with $4$ vertices},
   language={Hungarian},
   journal={Mat. Lapok},
   volume={23},
   date={1972},
   pages={113--116 (1973)},
   issn={0025-519X},
   review={\MR{0351897}},
}
	
\bib{Sz78}{article}{
   author={Szemer{\'e}di, Endre},
   title={Regular partitions of graphs},
   language={English, with French summary},
   conference={
      title={Probl\`emes combinatoires et th\'eorie des graphes},
      address={Colloq. Internat. CNRS, Univ. Orsay, Orsay},
      date={1976},
   },
   book={
      series={Colloq. Internat. CNRS},
      volume={260},
      publisher={CNRS, Paris},
   },
   date={1978},
   pages={399--401},
   review={\MR{540024}},
}

\bib{Turan}{article}{
   author={Tur{\'a}n, Paul},
   title={Eine Extremalaufgabe aus der Graphentheorie},
   language={Hungarian, with German summary},
   journal={Mat. Fiz. Lapok},
   volume={48},
   date={1941},
   pages={436--452},
   review={\MR{0018405}},
}

\end{biblist}
\end{bibdiv}
\end{document}